\numberwithin{equation}{section}
\def\M{{\partial M}}
\newtheorem{prop}{Proposition}[section]
\newtheorem{theo}[prop]{Theorem}
\newtheorem{lemm}[prop]{Lemma}
\newtheorem{rema}[prop]{Remark}
\def\begeq{\begin{equation}}
\def\endeq{\end{equation}}
\begin{document}
\title{Boundary Estimate of Asymptotically Hyperbolic Einstein Manifolds of Even Dimension}
\author{Xiaoshang Jin}
\date{}
\maketitle
\begin{abstract}
In this paper, we study the finite boundary regularity and estimates of an asymptotically hyperbolic Einstein manifold in even dimension $n+1.$ We show that if the initial compactification is $C^{n-1}$ and the $(n-3)$-th derivative of its scalar curvature is H\"older continuous, then the AHE metric is $C^{m,\alpha}$ conformally compact provided the boundary metric is $C^{m,\alpha}$. This is an improvement of Helliwell's result. We also provide an estimate of the Yamabe compactification metric in the new structure.
\end{abstract}
\section{Introduction}  \label{sect1}
\par It is well-known that there is a relationship between an asymptotically hyperbolic Einstein metric on a smooth manifold and the conformal structure on the boundary. The research has become the main theme in the study of conformal geometry since the introduction of the AdS/CFT correspondence in the quantum theory of gravity in theoretic physics by Maldacena in \cite{maldacena1999large}.
\par Let $\overline{M}$ be a compact smooth manifold of dimension $n+1$ with non-empty boundary $\M$ and $M$ be its interior. A complete metric $g^+$ on $M$ is called (smoothly) conformally compact if there exits a defining function $\rho$ on $\overline{M}$ such that the conformally equivalent metric $$g=\rho^2g^+$$ can extend to a smooth Riemannian metric on $\overline{M}.$ The defining function is smooth on $\overline{M}$ and satisfies
 \begin{equation}\label{1.1}
 \left\{
    \begin{array}{l}
    \rho>0\ \ in \ M
    \\\rho=0\ \ on\ \M
    \\d\rho\neq 0\ \ on\  \M
    \end{array}
 \right.
 \end{equation}
 We could also define the $C^{m,\alpha}$ ($W^{k,p}$) conformally compact if $\overline{M}$ is a compact $C^{m+1,\alpha}$ ($W^{k+1,p}$) manifold and there is a $C^{m+1,\alpha}$ ($W^{k+1,p}$) defining function $\rho$ such that $g=\rho^2g^+$ can extend to a $C^{m,\alpha}$ ($W^{k,p}$) Riemannian metric on $\overline{M}.$ Here $C^{m,\alpha}$ and $W^{k,p}$ are usual H\"older space and the Sobolev space.
 \par Given a conformally compact metric $g^+$ and its compactification $g,$ The induced metric $h=g|_{\M}$  is called the boundary metric associated to the compactification $g.$ It is easy to see that $(M,g^+)$ carries a well-defined conformal structure on the boundary $\M$ as each metric $\hat{g}\in [h]$ is induced by $\hat{g}=\hat{\rho}^2g^+$ for a defining function $\hat{\rho}.$ Then  the conformal class $[h]$ is uniquely determined by $(M,g^+).$ We call $[h]$ the conformal infinity of $g^+.$
 \par A conformally compact manifold is said to be asymptotically hyperbolic (AH) if its sectional curvature goes to $-1$ when
approaching the boundary at infinity $(\rho\rightarrow 0).$
\par If in addition, $g^+$ is Einstein, i.e.
  \begin{equation}\label{1.2}
Ric_{g^+}+ng^+=0,
 \end{equation}
 then we say $(M,g^+)$ is a conformally compact Einstein manifold.
By a direct calculation, a $C^2$ conformally compact Einstein manifold $(M,g^+)$ is asymptotically hyperbolic. Hence we can also say
that $(M,g^+)$ is an  asymptotically hyperbolic Einstein manifold or AHE manifold.
\\
\par To better understand the AHE manifold, we need to make good use of Einstein equation \ref{1.2} carefully. There are two approaches so far. The first approach is to avoid using degenerate elliptic PDE. We can consider the Bach flat equation in dimension 4 and ambient obstruction tensor flat equation in higher even dimension. If we choose the harmonic coordinates near the boundary and select some special compactification, such as Yamabe compactification or Fefferman-Graham compactification \cite{chang2018compactness1}\cite{chang2020compactness}, then we get the
$(n+1)$-th elliptic equation on metric $g$ with some complicated boundary conditions of
higher order of derivatives on metric and curvature. The second approach is introduced by John Lee in \cite{lee2006fredholm}.
He described the special "boundary M\"obius coordinate charts" on an asymptotically hyperbolic manifold that
relate the geometry to that of hyperbolic space. He also introduced the
weighted Sobolev and H\"older spaces of AH manifols. Then he studied some geometric elliptic operators
on these spaces. John Lee showed an existence result of AHE metric in \cite{lee2006fredholm}.  In \cite{chrusciel2005boundary}, Chru\'sciel et al. constructed the harmonic diffeomorphism near the infinity to obtain
a good structure near boundary where Einstein equation could be written as a degenerate elliptic PDE on metric $g^+$ of second order.
That is so-called ’gauged Einstein equation’. Then with the help of the properties of the weighted H\"older spaces of AH manifolds, they use polyhomogeneity result of some specific degenerate equation to obtain a good result of the boundary regularity.
\\
\par The regularity problem was first raised by Fefferman and Graham in \cite{graham1985conformal}. Given an AHE manifold $(M,g^+)$ and its compactification $g=\rho^2g^+,$ suppose that $g$ is $C^{k,\sigma}$ smooth to the boundary and the induced metric $h=g|_{\M}$ is smooth on the boundary. Is it true that the $g^+$ has a compactification of better regularity because of the Einstein equation? We would explain the problem more clearly from the following two aspects.
\par 1. We cannot expect that the conformal compactification of
an arbitrary conformally compact Einstein metric will necessarily have
optimal regularity for all smooth structure of $\overline{M}.$ Hence we need to find the best coordinate charts at infinity. It is also necessary to study the regularity of the new structure.
\par 2. Is there a minimal initial regularity of a conformal compactification $g$ that is needed to any possible improvement? This problem may have some relations with the accurate definition of AHE metric.
\par In \cite{chrusciel2005boundary}, Chru\'sciel et al. proved that if the initial conformal compactification $g$ is $C^2$ and the boundary metric is smooth, then $g^+$ has a conformal compactification that is smooth up to the boundary in the sense of $C^{1,\lambda}$ diffeomorphism in dimension 3 and all even dimensions, and polyhomogeneous smooth in odd dimensions greater than 3. However, their result only hold for smooth case. It is believed that if we use their method to deal with the finite regularity problem, we may loss half regularity in this situation.
\par When dealing with the finite regularity problem, we could use the first approach as mentioned above. Anderson claims in \cite{anderson2008einstein} that a version of these results holds for $n+1 = 4$ when the initial conformal compactification of $g^+$ is $W^{2,p}$ for some $p > 4.$ I am unable to verify the theorem under this condition. As a supplementary proof, in \cite{jin2019finite}, we prove his conclusion where we assume that the initial compactification $g$ is $C^2$ and the scalar curvature is H\"older continuous.
\par  In \cite{helliwell2008boundary}, Helliwell considered the Fefferman-Graham ambient obstruction tensor instead of Bach tensor in higher even dimensions. He assumed the initial compactification $g$ is at least in $C^{n,\alpha}$ for a $(n+1)-$ smooth manifold then obtained the finite regularity results. The project we work on in this paper is to extend his results by weakened the regularity of the initial compactification.
\par In \cite{chang2021compactness}, Chang-Ge-Jin-Qing studied a special compactification which they called "adopted metric". The metric was defined by solving a PDE on $M.$ They showed that (lemma 3.1), if  $n+1\geq 6$ is even and the $n+1-$ dimensional adopted metric $g^*$ is a $C^{n-1}$ compactification of an AHE metric $g^+,$  then $g^*$ is $C^{m,\alpha}$ when the boundary metric is $C^{m,\alpha}.$ They also get the $C^{m,\alpha}$ estimate of $g^*$ near the boundary in harmonic charts.
\\
\par This is the main result:
\begin{theo}\label{theorem 1.1}
Let $(M,g^+)$ be an asymptotically hyperbolic Einstein manifold of even dimension $n+1$ and $g=\rho^2g^+$ be a $C^{n-1}$ compactification. If the scalar curvature $S_g\in C^{n-3,\sigma}(\overline{M})$ for some $\sigma>0,$ the boundary metric $h=g|_{\partial M}\in C^{m,\alpha}(\partial M)$ with $m\geq n-1,\alpha\in(0,1),$ then under a $C^{n-1,\lambda}$ coordinates change, $g^+$ has a $C^{m,\alpha}$ conformally compactification $\tilde{g}=\tilde{\rho}^2g^+$ with the boundary metric $\tilde{g}|_{\M}=h.$ Furthermore, The new coordinates form $C^{m+1,\alpha}$ differential structure of $\overline{M}$ and
$\tilde{\rho}$ is a $C^{m+1,\alpha}$  defining function in the new structure.
\end{theo}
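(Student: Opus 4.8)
The plan is to deduce the theorem from Lemma 3.1 of \cite{chang2021compactness} by manufacturing, out of the given $C^{n-1}$ compactification $g$, an \emph{adopted} metric $g^*$ that is still $C^{n-1}$ up to $\M$; once such a $g^*$ is available the cited lemma promotes it to $C^{m,\alpha}$ and furnishes the harmonic-chart estimates, after which the statements about the differential structure and the defining function are read off by elliptic regularity for the coordinate functions. Thus the argument has two parts: first, construct $g^*$ and prove it is $C^{n-1}$, which is the step that consumes the hypothesis $S_g\in C^{n-3,\sigma}$; second, repackage the output of Lemma 3.1 in the coordinate and regularity language of the statement.

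For the first part, the starting point is the identity forced by the Einstein equation. Writing $g^+=\rho^{-2}g$ and inserting $S_{g^+}=-n(n+1)$ into the conformal transformation law of the scalar curvature in dimension $n+1$ gives
\begin{equation*}
n(n+1)\bl|\nabla_g\rho|_g^2-1\br=\rho^2S_g+2n\,\rho\,\Delta_g\rho .
\end{equation*}
This displays $\Delta_g\rho$ as a degenerate (of order $1/\rho$) expression in $|\nabla_g\rho|_g^2$, $\rho$ and $S_g$, and it is the conduit through which the regularity of $S_g$ passes to the defining function. I would define the adopted defining function $\tilde\rho$ by solving on $M$ the equation that fixes the adopted gauge of \cite{chang2021compactness}, normalizing the boundary data so that $\tilde g|_{\M}=h$, and carrying out the analysis in the weighted H\"older spaces of \cite{lee2006fredholm} to control the behaviour as $\rho\to0$. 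The crucial point is that the inhomogeneity of this equation involves only $S_g\in C^{n-3,\sigma}$ together with the $C^{n-1}$ coefficients of $g$, so that Schauder theory for the degenerate equation returns $\tilde\rho$ with two derivatives more than the source, whence $g^*=\tilde\rho^2g^+$ is $C^{n-1}$ up to the boundary. This is precisely where the scalar-curvature hypothesis makes up for the half derivative that the method of \cite{chrusciel2005boundary} would otherwise forfeit in the finite-regularity regime.

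For the second part, I would pass to $g^*$-harmonic coordinates near $\M$. Since $g^*\in C^{n-1}\subset C^{n-2,\lambda}$ for every $\lambda\in(0,1)$, the standard regularity theory for harmonic coordinates places them in $C^{n-1,\lambda}$; this is the asserted $C^{n-1,\lambda}$ change of chart, and in these coordinates $g^*$ is a $C^{n-1}$ adopted metric whose boundary metric is $h\in C^{m,\alpha}$. Lemma 3.1 of \cite{chang2021compactness} then applies directly and yields $g^*\in C^{m,\alpha}$ with the accompanying near-boundary estimate; setting $\tilde g=g^*$ completes the metric assertion, and $\tilde g|_{\M}=h$ by the normalization. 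Finally, once $g^*\in C^{m,\alpha}$ is known, the same harmonic coordinates are now $C^{m+1,\alpha}$ by the same regularity theory, so they form a $C^{m+1,\alpha}$ atlas; one further bootstrap of the equation for $\tilde\rho$ places it in $C^{m+1,\alpha}$ within that atlas, as claimed.

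The step I expect to be the main obstacle is the regularity estimate of the first part. The equation governing $\tilde\rho$ degenerates at $\M$, so ordinary boundary Schauder theory does not apply as is; one must work in the weighted framework and verify that the $C^{n-3,\sigma}$ control on $S_g$ is exactly sufficient to propagate $C^{n-1}$ regularity up to $\rho=0$ with no loss. Securing the transition diffeomorphism at the sharp $C^{n-1,\lambda}$ level, uniformly up to the boundary, is the delicate ingredient that legitimizes the reduction to Lemma 3.1.
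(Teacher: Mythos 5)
There is a genuine gap, and it sits exactly where you predicted: the construction of a $C^{n-1}$ adopted metric is not carried out, and the mechanism you propose for it is the one the paper's introduction identifies as insufficient. The adopted metric of \cite{chang2021compactness} is obtained by solving a PDE on the noncompact interior $M$, so its conformal factor is governed by an equation that degenerates at $\M$; the claim that weighted/degenerate Schauder theory ``returns $\tilde\rho$ with two derivatives more than the source, with no loss up to $\rho=0$'' is precisely the half-derivative issue that makes the method of \cite{chrusciel2005boundary} fail in the finite-regularity regime. The paper avoids this entirely: it passes to the \emph{Yamabe} (constant scalar curvature) compactification, whose conformal factor solves a \emph{non-degenerate} semilinear elliptic equation on the compact manifold $\overline{M}$ with Dirichlet data, so that $S_g\in C^{n-3,\sigma}$ gives $\tilde\rho\in C^{n-1,\sigma}$ by ordinary Schauder theory. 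The actual work of the theorem is then done not by citing an external lemma but by writing the ambient obstruction tensor flat equation $\mathcal{O}_{ij}=0$ in boundary harmonic coordinates, proving that its lower-order term $\mathcal{Q}(Rm)$ involves at most $n-3$ derivatives of curvature, and running the \emph{intermediate} Schauder theory of \cite{gilbarg1980intermediate} in the weighted spaces $H_a^{(b)}$ to compensate for the fact that the equation has order $n+1$ while $g$ is only $C^{n-1}$; none of this appears in your proposal, and reducing to Lemma 3.1 of \cite{chang2021compactness} only relocates it (that lemma's hypothesis is that the adopted metric itself is already $C^{n-1}$, which is the hard part).

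A second, independent gap is the final claim $\tilde\rho\in C^{m+1,\alpha}$. From $\tilde g\in C^{m,\alpha}$ one only gets $Ric\in C^{m-2,\alpha}$, and the elliptic equation for $\tilde\rho$ (the trace-free Hessian identity, equation (\ref{4.19}) in the paper) has $\rho\,Ric_{01}$ on its right-hand side; a naive bootstrap therefore stalls at $\tilde\rho\in C^{m,\alpha}$, one derivative short. The paper needs an extra idea here: it builds the auxiliary elliptic system (\ref{4.20}) for $\Delta^k(\rho R_{01})$, $1\le k\le\frac{n+1}{2}-1$, out of the obstruction-flat equation to prove $\rho R_{01}\in C^{m-1,\alpha}$ directly, which is strictly better than what the product of $\rho\in C^{m,\alpha}$ with $R_{01}\in C^{m-2,\alpha}$ gives. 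Your ``one further bootstrap of the equation for $\tilde\rho$'' does not supply this.
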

It is clear that for an AHE metric $g^+$ and $g=\rho^2g^+,$ we have that $|\nabla\rho|_{\rho^2g^+}=1$ on $\M.$ In fact, if we fix a representative $h\in [h],$ then there is a unique defining function which we call geodesic defining function $r$ such that $|\nabla r|_{r^2g^+}\equiv 1$ in a neighbourhood of $\M.$ (See \cite{lee1994spectrum} for more details.) It implies that
in $\M\times(0,\epsilon)\subset M,$ $g^+$ has the normal form
$$g^+=r^{-2}(dr^2+g_r),$$
where $g_r$ is a 1-parameter family of metrics on $\M.$ Then we have the following
expansions of the metric \cite{graham1985conformal}
\begin{equation}\label{1.3}
g_r=h+g^{(2)}r^2+(even\ powers\ of\ r) + g^{(n-1)}r^{n-1}+g^{(n)}r^n+\cdots
\end{equation}
 if $\dim M= n+1$ is  even.
 \begin{equation}\label{1.4}
g_r=h+g^{(2)}r^2+(even\ powers\ of\ r) + g^{(n)}r^{n}+fr^n\log r+\cdots
\end{equation}
if $\dim M= n+1$ is odd.
\\ Here $g^{(2i)}$ are determined by $h$ for $2i < n$ and $g^{(n)}$ is non-local and determined by the $g^+$ and $h.$ The rest of power series is determined by $h$ and $g^{(n)}$ when $n+1$ is even. In other words, if we have some information of $h$ and $g^{(n)},$ we could obtain the properties of $g,$ such as the regularity and compactness of $g.$  In Helliwell’s paper, that he needed the initial compactification  to have $C^{n,\sigma}$ regularity seems very natural. Now we reduce it to
$C^{n-1}$ and it is a big step as we don't use any information about the non-local term.
\\
\par We also get an estimate of the Yamabe compactification locally in the new coordinates, that is:
\begin{theo}\label{theorem 1.2}
Suppose that $(M,g^+)$ is an AH Einstein manifold of even dimension $n+1$ and $g=\rho^2g^+$ is a $C^{n-1}$ Yamabe compactification. The boundary metric $h=g|_{\partial M}\in C^{m,\alpha}(\partial M)$ with $m\geq n-1,\alpha\in(0,1).$ For any
$p\in\M,$ let $(\overline{U},\{x^\beta\}_{\beta=0}^n)$ be a local harmonic chart of $p$ and $D=\overline{U}\cap\M.$  Then we have
\begin{equation}\label{1.5}
  |g|_{C^{m,\alpha}(\overline{U})}\leq C
\end{equation}
where $C$ depends on $n,\overline{U},|g|_{C^{1,\sigma}(\overline{U})},|Rm|_{C^{n-3}(\overline{U})},|h|_{C^{m,\alpha}(D)},$ the minimum eigenvalue of $g$ in $\overline{U}$ and $|\rho|_{C^{n-1,\sigma}(\overline{U})}$ for some $\sigma\in (0,1).$
\end{theo}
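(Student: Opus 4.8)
The plan is to run the boundary-regularity scheme behind Theorem~\ref{theorem 1.1} inside the given harmonic chart, now keeping quantitative track of every constant. The key simplification is that a Yamabe compactification has constant scalar curvature, so $S_g$ lies automatically in $C^{n-3,\sigma}(\overline{U})$ and the hypotheses driving the regularity are in force; what remains is to turn the qualitative bootstrap into an a priori bound on $\overline{U}$.

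First I would record the two governing equations in the coordinates $\{x^\beta\}$. Since the chart is $g$-harmonic, the Ricci tensor takes the quasilinear form
\[
Ric_{ij}(g)=-\tfrac12\,g^{kl}\partial_k\partial_l g_{ij}+Q_{ij}(g,\partial g),
\]
so that controlling $Ric$ is equivalent to a second-order elliptic estimate on $g$, with ellipticity constant governed by the minimum eigenvalue of $g$ on $\overline{U}$ and with $Q_{ij}$ bounded in terms of $|g|_{C^{1,\sigma}(\overline{U})}$. The Einstein condition enters through the ambient obstruction tensor: because $\mathcal{O}_{g^+}=0$ for the Einstein metric $g^+$ and $\mathcal{O}$ is conformally invariant, the compactification satisfies $\mathcal{O}_g=0$. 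In the harmonic gauge this is a quasilinear elliptic equation for $g$ of order $n+1$ whose principal part is $\Delta_g^{(n+1)/2}g_{ij}$ and whose remaining terms are polynomial in $g,\partial g$ and in the curvature together with its covariant derivatives up to order $n-3$; thus the right-hand side is controlled by $|Rm|_{C^{n-3}(\overline{U})}$. The trace-free part of $g$ is governed by this order-$(n+1)$ equation, while the trace part is pinned down by the Yamabe condition $S_g=\mathrm{const}$, which the obstruction tensor (being trace-free) does not see; together they form a determined elliptic system for $g_{ij}$.

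Next I would organise the boundary data. The even coefficients $g^{(2)},\dots,g^{(n-1)}$ in the normal-form expansion \eqref{1.3} are locally determined by $h$, and there are exactly $(n+1)/2$ of them --- precisely the number of boundary conditions an order-$(n+1)$ elliptic equation requires. I would prescribe these, together with $g|_D=h$, as Dirichlet-type data, using the geodesic defining function to pass between $g$ and its normal form; this is where $|\rho|_{C^{n-1,\sigma}(\overline{U})}$ and $|h|_{C^{m,\alpha}(D)}$ enter the constant. After checking that this boundary operator complements the obstruction operator, I would invoke the interior and boundary Schauder estimates for high-order elliptic systems and bootstrap: the right-hand side involves at most $n-1$ derivatives of $g$, so if $g\in C^{k}$ then the right-hand side lies in $C^{k-n+1}$ and the order-$(n+1)$ estimate returns $g\in C^{k+2}$; iterating from $g\in C^{n-1}$ gains two derivatives per step until the boundary regularity saturates at $C^{m,\alpha}$, which is exactly where the data $h\in C^{m,\alpha}$ cuts off.

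The step I expect to be hardest is the high-order boundary value problem itself: verifying the complementing (Lopatinski--Shapiro) condition for the obstruction operator in the harmonic gauge and matching it to the $(n+1)/2$ prescribed Fefferman--Graham coefficients, and then carrying the constants cleanly through the Schauder iteration so that the final bound depends only on the quantities listed. A secondary difficulty is the low initial regularity: with $g$ merely in $C^{n-1}$ the equation $\mathcal{O}_g=0$ must first be read in the weak sense --- its nonlinear part is well defined since it uses at most $n-3$ derivatives of $Rm$ --- and one must secure the first derivative gain (for instance via an $L^p$ estimate promoted by the $\sigma$-H\"older control of $g$ and $\rho$) before the clean two-at-a-time bootstrap can begin.
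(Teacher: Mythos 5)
Your proposal takes a genuinely different route --- treating $\mathcal{O}_g=0$ as a single quasilinear elliptic equation of order $n+1$ for $g$, with the Fefferman--Graham coefficients as Dirichlet-type data and a Lopatinski--Shapiro verification --- but as written it has gaps at exactly the two points you flag as "hardest," and these are not details but the substance of the theorem. First, the boundary data: the coefficients $g^{(2)},\dots,g^{(n-1)}$ live in the geodesic normal form \eqref{1.3}, which is tied to the geodesic defining function, not to the Yamabe compactification in the harmonic chart; passing between the two at regularity $C^{n-1}$ loses derivatives, and the paper deliberately avoids the normal form altogether. Instead it derives boundary conditions intrinsically from the Einstein equation and the constant-scalar-curvature condition: Dirichlet and Neumann data for $Ric$ (\ref{3.2})--(\ref{3.5}), Dirichlet and oblique data for $g$ (\ref{3.1}), (\ref{3.6}), (\ref{3.7}), and Dirichlet data for $\Delta^mRic$ (\ref{3.8}). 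This lets it decompose the order-$(n+1)$ equation into a cascade of \emph{second-order} Dirichlet/oblique problems --- for $u=\Delta^{\frac{n+1}{2}-3}(\Delta Ric+\Gamma\ast\partial Ric)$, then for $Ric$, then for $g$ via $\Delta g_{\alpha\beta}=-2R_{\alpha\beta}+Q(g,\partial g)$ --- so no complementing condition for a high-order system ever needs to be checked.

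Second, and more seriously, the starting point of your bootstrap is not secured. With $g\in C^{n-1}$ the right-hand side $\mathcal{Q}$ involves $n-3$ derivatives of $Rm$ and is a priori only continuous, so no Schauder estimate (of any order) applies, and "an $L^p$ estimate promoted by the $\sigma$-H\"older control of $g$ and $\rho$" is a placeholder, not an argument. The paper's mechanism here is specific: from the conformal transformation law (\ref{2.2}) one has $\rho\,Ric=Q(\partial g,\partial^2\rho)\in C^{n-3,\sigma}$, hence $\rho\,\partial^{n-3}Ric\in C^\sigma$, and a weighted interpolation inequality then places $\partial^{n-3}Ric$ in the Gilbarg--H\"ormander space $H_\sigma^{(1-\sigma)}$; this puts $\mathcal{Q}\in H_\sigma^{(1-\sigma)}$, and the \emph{intermediate} Schauder theory of Section 2.3 (not the classical one) yields $u\in H_{2+\sigma}^{(-\sigma)}\subset C^\sigma(\overline{U})$, after which the classical theory takes over. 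Without this weighted-space step --- which is where the hypotheses $\rho\in C^{n-1,\sigma}$ and $S_g$ constant actually do their work --- the iteration never begins. Your observation that the Yamabe condition supplies $S_g\in C^{n-3,\sigma}$ automatically is correct and is indeed why Theorem \ref{theorem 1.2} needs no separate scalar-curvature hypothesis, but the rest of the scheme needs to be rebuilt along the paper's lines to close.
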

\begin{rema}
From Theorem 1.2, we can deduce that for the $C^{n-1}$ Yamabe compactification $g=\rho^2g^+,$ if the $C^{1,\sigma}$ harmonic
radius $r_0$ (see section 3 in \cite{anderson2004boundary}) and $|Rm_g|_{C^{n-3}}$ are bounded, then $(\M\times [0,r_0),g)$ are $C^{m,\alpha}$ compact in Cheeger-Gromov topology when the boundary metric $(\M,h)$ are $C^{m,\alpha}$ compact.
\end{rema}

\par This is the outline of the paper. In section \ref{sect2}, we introduce some basic facts about asymptotically hyperbolic metrics and some useful tools. We show that the Yamabe compactification always exists and the Yamabe compactification is still $C^{n-1}$ under the conditions in Theorem \ref{theorem 1.1}. Then we introduce the harmonic coordinates near boundary. This is a new structure of $\overline{M}$ and is only of $C^{n-1,\alpha}$ regularity.  We also consider the ambient obstruction tensor for an even dimensional manifold and give a representation in local coordinates.  Lastly,  we present the intermediate Schauder theory, i.e. the $C^\alpha$ and $C^{1,\alpha}$ estimates for elliptic PDE of order 2 under weak regularity hypotheses.
\par In section \ref{sect3}, we deduce some boundary conditions, including the Dirichlet conditions of metric and Ricci curvature, the  Neumann conditions of Ricci curvature and the oblique derivative conditions of metric if the compactification $g$ is only $C^2.$ We also give the boundary conditions of higher order of derivatives on Ricci curvature when the compactification $g\in C^{n-1}.$
\par In section \ref{sect4}, we prove the main theorem. Firstly, we study the term of fewer derivatives in the ambient obstruction tensor and prove that it contains at most $(n-3)$-th derivative on curvature. Then we prove the main theorems with the intermediate Schauder theory. In the end, we use the ambient obstruction tensor flat equation to prove the regularity of the new structure and the new defining function.

\section{Preliminaries}  \label{sect2}
Let $(M,g^+)$ be an $(n+1)$-dimensional conformally compact Einstein manifold and $g=\rho^2g^+$ is a compactification. Then
\begin{equation}\label{2.1}
K_{ab}=\frac{K^+_{ab}+|\nabla\rho|^2}{\rho^2}-\frac{1}{\rho}[D^2\rho(e_a,e_a)+D^2\rho(e_b,e_b)],
\end{equation}
\begin{equation}\label{2.2}
Ric=-(n-1)\frac{D^2\rho}{\rho}+[\frac{n(|\nabla\rho|^2-1)}{\rho^2}-\frac{\Delta\rho}{\rho}]g,
\end{equation}
\begin{equation}\label{2.3}
S=-2n\frac{\Delta\rho}{\rho}+n(n+1)\frac{|\nabla\rho|^2-1}{\rho^2}.
\end{equation}
Here $K_{ab},Ric,S$ are the sectional curvature, Ricci curvature and scalar curvature of $g$ and $D^2$ denote the Hessian. (Readers can see \cite{besse2007einstein} for the conformal transformation law of curvatures.)
\par If $g$ is a $C^2$ compactification, then from (\ref{2.3}),$|\nabla\rho|=1$ on $\M.$  Hence (\ref{2.1}) implies that $K^+_{ab}$ tends to $-1$ as $\rho\rightarrow 0.$ Hence a $C^2$ conformally compact Einstein manifold is asymptotically hyperbolic.
Let $D^2\rho|_{\M}=A$ denote the second fundamental form of $\M$ in $(\overline{M},g).$
The equation (\ref{2.2}) further implies that $\M$ is umbilic.
\subsection{Yamabe Compactification and Harmonic Coordinates}
\begin{lemm}
Let $(M,g^+)$ be a conformally compact n-manifold with a $W^{2,p}$ conformal compactification $g=\rho^2g^+$ where $p> n/2.$ Suppose that $h=g|_{\M}$ is the boundary metric. Then there exits a $W^{2,p}$ constant scalar curvature compactification $\tilde{g}=\tilde{\rho}^2g^+$ with boundary metric $h.$
\end{lemm}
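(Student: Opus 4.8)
The plan is to obtain $\tilde g$ as a conformal change $\tilde g=u^{4/(n-2)}g$ of the given compactification, choosing the positive factor $u$ so that $\tilde g$ has constant scalar curvature while keeping the boundary metric fixed. Under $\tilde g=u^{4/(n-2)}g$ the conformal transformation law for the scalar curvature shows that prescribing $S_{\tilde g}\equiv\lambda$ is equivalent to solving the Yamabe equation
\begin{equation*}
-\frac{4(n-1)}{n-2}\,\Delta_g u+S_g\,u=\lambda\,u^{\frac{n+2}{n-2}},\qquad u>0\ \text{in}\ M,\qquad u=1\ \text{on}\ \partial M .
\end{equation*}
The Dirichlet condition $u\equiv1$ on $\partial M$ is exactly what forces $\tilde g|_{\partial M}=u^{4/(n-2)}g|_{\partial M}=h$, so the lemma reduces to producing a positive $W^{2,p}$ solution of this boundary value problem. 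I would fix the target constant to be \emph{negative}, say $\lambda=-1$ (any negative normalization serves); as will be clear, this sign is what makes both the existence and the regularity arguments go through.

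For existence I would use the direct method of the calculus of variations. The equation is the Euler--Lagrange equation of
\begin{equation*}
E(u)=\int_M\Big(\frac{2(n-1)}{n-2}\,|\nabla u|^2+\frac12\,S_g\,u^2+\frac{|\lambda|}{2^*}\,|u|^{2^*}\Big)\,dV_g,\qquad 2^*=\frac{2n}{n-2}
\end{equation*}
minimized over the affine space $\{u:\ u-1\in H^1_0(M)\}$. Because $\lambda<0$ the critical term enters with a \emph{positive} coefficient, which is the source of coercivity; the only delicate term is $\int_M S_g u^2$, but since $S_g\in L^p$ with $p>n/2$ one has $2p'<2^*$ (with $p'$ the conjugate exponent), so H\"older's inequality followed by interpolation and Young's inequality bounds it by $\epsilon\|u\|_{2^*}^{2^*}$ plus a term of order lower than $\|\nabla u\|_2^2$; hence $E$ is coercive. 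The gradient term is weakly lower semicontinuous, $\int_M|u|^{2^*}$ is weakly lower semicontinuous by convexity, and $\int_M S_g u^2$ is weakly continuous because $H^1\hookrightarrow L^{2p'}$ is compact ($2p'<2^*$); therefore $E$ attains its infimum at some $u$ solving the equation weakly with boundary trace $1$. Since $E(|u|)=E(u)$ I may take $u\ge0$, and once continuity is known (below) the strong maximum principle together with $u=1$ on $\partial M$ upgrades this to $u>0$ on $\overline M$.

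The regularity is where the main difficulty lies, since $S_g$ is only $L^p$ and the nonlinearity $u^{(n+2)/(n-2)}$ is critical, so $L^p$ elliptic theory cannot be applied at once. I would rewrite the equation as $-\frac{4(n-1)}{n-2}\Delta_g u=V u$ with $V=\lambda u^{4/(n-2)}-S_g$, and observe that $u^{4/(n-2)}\in L^{n/2}$ (because $u\in L^{2^*}$) while $S_g\in L^p\subset L^{n/2}$, so $V\in L^{n/2}$. A Brezis--Kato argument (Moser iteration) then yields $u\in L^q$ for every $q<\infty$; this in turn gives $u\in W^{2,s}$ for every $s<p$, hence $u\in C^0(\overline M)$ by Sobolev embedding (using $p>n/2$), so that finally $S_g u\in L^p$ and $u^{(n+2)/(n-2)}\in L^p$, and $L^p$-regularity for the Dirichlet problem gives $u\in W^{2,p}$.

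It remains to assemble the compactification. Since $p>n/2$, the space $W^{2,p}(\overline M)$ is a Banach algebra, and composition with the smooth map $t\mapsto t^{4/(n-2)}$ (legitimate because $u$ is continuous and bounded away from $0$) preserves it; hence $u^{4/(n-2)}\in W^{2,p}$ and $\tilde g=u^{4/(n-2)}g\in W^{2,p}$, with $\tilde\rho=u^{2/(n-2)}\rho$ a $W^{2,p}$ defining function. By construction $S_{\tilde g}\equiv\lambda$ is constant and $\tilde g|_{\partial M}=h$, which proves the lemma. The genuine obstacle throughout is propagating regularity through the critical exponent while $S_g$ is merely $L^p$; it is precisely the negative sign of $\lambda$ that secures both the coercivity needed for existence and the $L^{n/2}$ potential needed for the Brezis--Kato bootstrap.
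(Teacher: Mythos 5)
The paper does not actually prove this lemma; it simply cites it as Lemma~2.1 of the author's earlier work \cite{jin2019finite}, so there is no in-paper argument to compare against. Your proof is the standard one and is essentially correct: conformal change $\tilde g=u^{4/(n-2)}g$, Dirichlet condition $u\equiv 1$ to preserve the boundary metric, negative target constant so that the critical term enters the energy with a favorable sign, direct method for existence (the inequality $2p'<2^*$ being exactly equivalent to $p>n/2$), Brezis--Kato to break the critical exponent, and then $L^p$ theory plus the algebra property of $W^{2,p}$ for $2p>n$. Two points deserve an explicit word if this were written out in full. First, all the elliptic estimates are for $\Delta_g$ with a metric that is only $W^{2,p}$: you should note that $g^{ij}\in W^{2,p}\hookrightarrow C^0$ and $\Gamma\in W^{1,p}\hookrightarrow L^{np/(n-p)}$ with $np/(n-p)>n$ precisely when $p>n/2$, which is what licenses the Calder\'on--Zygmund estimates and the Moser iteration you invoke. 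Second, the common alternative route in this literature is the method of sub- and supersolutions, which is available here because the nonlinearity $-u^{(n+2)/(n-2)}$ is monotone decreasing in $u$; that version trades your coercivity/weak-lower-semicontinuity discussion for the construction of barriers ($u\equiv\epsilon$ and a large constant work once one normalizes $\lambda<0$), and gives positivity for free. Either way the lemma holds as stated, with the caveat (already present in the paper's own usage) that the new defining function $\tilde\rho=u^{2/(n-2)}\rho$ is only $W^{2,p}$, one order below what the paper's formal definition of a $W^{2,p}$ compactification nominally requires of $\tilde\rho$.
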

This is Lemma 2.1 in \cite{jin2019finite}. We call $\tilde{g}$ the Yamabe compactification of $g^+.$ Furthermore, if $g\in C^{n-1}$ and $S_g\in C^{n-3,\sigma}$ for some $\sigma>0,$ we could prove that the new defining function $\tilde{\rho}\in C^{n-1,\sigma}$ and $\tilde{g}=\tilde{\rho}^2g$ is still $C^{n-1}.$
In the following of this paper, we don't distinguish $g$ with $\tilde{g}.$ When we refer to the compactification $g,$ we mean that
 the scalar curvature of $g$ is constant $(-1)$ near the boundary and the defining function is $C^{n-1,\sigma}.$
\\
\par In the rest of the paper, if there are no special instructions, any use of indices will
follow the convention that Roman indices will range from 1 to n, while Greek indices range from 0 to n.
\par We call the coordinates $\{x^\beta\}_{\beta=0}^n$ harmonic coordinates with respect to $g$ if $$\Delta_gx^\beta=0$$
 for $0\leq\beta\leq n.$ For any point $p\in\M,$ let$(\overline{V},\{y^\beta\}_{\beta=0}^n)$ be its local chart satisfying that $y^0|_{\M}=0.$ If $g$ is Lipschitz, then there exists a harmonic chart $(\overline{U},\{x^\beta\}_{\beta=0}^n)$ around $p$ and $x^0|_{\M}=0.$ (See \cite{jin2017extension} or \cite{jin2019finite}.)
 \par In particular, if $g\in C^{n-1},$ then these two charts are $C^{n-1,\alpha}$ compatible in the sense that $x\in C^{n-1,\alpha}(y)$ for any $\alpha\in(0,1).$ Hence
$$g_{\alpha\beta}=g(\frac{\partial}{\partial x^\alpha},\frac{\partial}{\partial x^\beta})\in C^{n-2,\alpha}(\overline{M})$$
and  $\bigcup\limits_{p\in\M} (\overline{U}_p,\{x^\beta\}_{\beta=0}^n)$ forms a $C^{n-1,\alpha}$ structure of $\M\times[0,\epsilon).$
\\ In harmonic coordinates $\{x^\beta\}_{\beta=0}^n$, the Ricci tensor could be written as:
$$\Delta g_{ij}=-2R_{ij}+Q(g,\partial g)$$
where $Q(g,\partial g)$ is a polynomial of $g$ and $\partial g.$ One can see \cite{deturck1981some} for more details.

\subsection{The Ambient Obstruction Tensor Flat Equation}
In \cite{graham1985conformal}, Fefferman and Graham gave the introduction of Ambient Obstruction Tensor $\mathcal{O}_{ij}$ for an even dimensional manifold. This is a
generalization of the Bach tensor in dimension 4. It involves $n+1$ derivatives of the metric on a
manifold of even dimension $n+1\geq 4$ and it is a conformal invariant and vanishes for Einstein metric.
It follows from Theorem 2.1 in \cite{graham2005ambient} that in local coordinates, we have the following obstruction tensor flat equation:
\begin{equation}\label{2.4}
\mathcal{O}_{ij}=\Delta^{\frac{n+1}{2}-2}(P_{ij,k}^{\ \ \ \ k}-P_{k\ \ ,ij}^{\ k})+lots=0,
\end{equation}
where  $P_{ij}=\frac{1}{n-1}(R_{ij}-\frac{S}{2n}g_{ij})$ is the Schouten tensor and lots denotes quadratic and higher terms in curvature involving fewer derivatives. We will study  the ambient obstruction tensor in section 4.

\subsection{Intermediate Schauder Estimate}
Since the initial compactification $g$ is only $C^{n-1}$ and the ambient obstruction flat equation involves $n + 1$ derivatives of the metric, we can't use the classical Schauder theory directly. Hence we will firstly introduce the conception of "intermediate Schauder theory" of elliptic PDE in \cite{gilbarg1980intermediate}, i.e. $C^\alpha$ and $C^{1,\alpha}$ estimate.
\par Let $\Omega$ be a bounded convex domain in $\mathds{R}^n$ and $a$ be a positive number satisfying  $a=k+\beta$ ($k\in\mathds{N}, \beta\in(0,1]$.)
Defining
$$|u|_a=\sum\limits_{|\alpha|\leq k}|D^\alpha u|_0+\sum\limits_{|\alpha|=k}\sup\limits_{x,y\in \Omega}\frac{|D^\alpha u(x)-D^\alpha u(u)|}{|x-y|^\beta}.$$
Let $H_a(\Omega)$ denote the H\"older space of functions with finite norm $|u|_a$ on $\Omega,$ i.e. $H_a(\Omega)=C^ {k,\beta}(\overline{\Omega})$. Setting
$$\Omega_\delta=\{x\in\Omega|dist(x,\partial\Omega)>\delta\}$$
Let $b$ be a number satisfying $a+b\geq 0$ and define
$$|u|_{a,\Omega}^{(b)}=|u|_{a}^{(b)}=\sup\limits_{\delta>0}\delta^{a+b}|u|_{a,\Omega_\delta}$$
Let $H_a^{(b)}(\Omega)$ denote the space of functions $u$ in $H_a(\Omega_\delta),(\forall \delta>0)$ such that $|u|_{a,\Omega}^{(b)}$ is finite.
Let $H_a^{(b-0)}(\Omega)$ be the space of functions $u$ in $H_a^{(b)}(\Omega)$ such that if $\delta\rightarrow 0,$ then $\delta^{a+b}|u|_{a,\Omega_\delta}\rightarrow 0$.
\par Basic properties: (the following constant $C$ depends on $a,b,\Omega.$)
\begin{itemize}
\item[1.] $H_a^{(-a)}(\Omega)=H_a(\Omega)=C^{k,\beta}(\overline{\Omega})$. Noticing that if $a$ is positive integer, $H_a(\Omega)=C^{a-1,1}(\overline{\Omega});$
\item[2.] If $b\geq b'$, then $|u|_{a,\Omega}^{(b)}\leq C|u|_{a,\Omega}^{(b')};$
\item[3.] If $0\leq a'\leq a,a'+b\geq 0$ and $b$ is not a non-positive integer, then $|u|_{a',\Omega}^{(b)}\leq C|u|_{a,\Omega}^{(b)}$;
\item[4.] If $0\leq c_j\leq a+b, a\geq 0,j=1,2,$ then
$$|uv|_{a}^{(b)}\leq C(|u|_{a}^{(b-c_1)}|v|_{0}^{(c_1)}+|u|_{0}^{(c_2)}|v|_{a}^{(b-c_2)})$$
Specially, if $u$ and $v$ are continuous functions (bounded), then $|uv|_{a}^{(b)}\leq C(|u|_{a}^{(b)}+|v|_{a}^{(b)})$. Here $C$ also depends on
the $L^\infty$ norm of $u$ and $v.$
\end{itemize}
With the preparations above, we could state the intermediate Schauder theory. Assuming that $\Omega$ is a bounded $C^\gamma$ domain where $\gamma\geq 1$ and $a,b$ are not integer satisfying
$$0<b\leq a, \ \ a>2, \ \ b\leq\gamma$$
Let
$$P=\sum\limits_{|\alpha|\leq 2}p_\alpha(x)D^\alpha$$
be the elliptic differential operator of second order on $\overline{\Omega}$  where
$$p_\alpha\in H_{a-2}^{(2-b)}(\Omega), \ \ if \ |\alpha|\leq 2$$
$$p_\alpha\in H_0(\Omega),\ \ if \ |\alpha|= 2$$
$$p_\alpha\in H_{a-2}^{(2-|\alpha|-0)}(\Omega), \ \ if \ b<|\alpha|.$$
Then if $u\in C^0(\overline{\Omega})\cap C^2(\Omega)$ is a solution of
\begin{equation}\label{2.5}
Pu=f \ \ in \ \Omega, \ \ \ u=\varphi \ \ on \ \partial\Omega
\end{equation}
where $f\in H_{a-2}^{(2-b)}(\Omega), \varphi\in H_b(\partial\Omega,$ it follows that $u\in H_{a}^{(-b)}(\Omega)$ and satisfies an estimate
$$u_{a}^{(-b)}\leq C(|u|_0+|\varphi|_{b,\partial\Omega}+|f|_{a-2}^{(2-b)}).$$
Here $C$ depends on $\Omega,a,b$ the norms of the coefficients and their minimum eigenvalue. If $p_0\leq 0,$ the Dirichlet problem \ref{2.5} has a unique solution in $H_{a}^{(-b)}(\Omega)$ and a corresponding Fredholm-type theorem holds in general.
\section{The Boundary Conditions}\label{sect3}
\par In this section, we derive a boundary problem for $g$ and Ricci curvature of a conformal compact Einstein manifold in the harmonic coordinates as defined in section 2. For any $p\in \M,$ there is a neighborhood $\overline{U}$ contains $p$ and a local harmonic chart $\{x^\beta\}.$  Let $D=\overline{U}\cap \M$ be the boundary portion and let $g\in C^{n-1}(\overline{U})$ be the Yamabe compactification. We will firstly give the Dirichlet and Neumann boundary conditions of $g$ and $Ric(g)$ on D. Here we state that the boundary conditions in this section hold for all dimension.
\par Firstly, as it is showed in \cite{helliwell2008boundary} and \cite{jin2019finite} that, if $g^+$ is $C^2$ conformally compact, we have the following boundary conditions:
\begin{prop}
Let $(M,g^+)$ be an $(n+1)$-dimensional conformally compact Einstein manifold with a $C^{3,\alpha}$ Yamabe compactification $g=\rho^2g^+.$ $g|_{\M}=h$ is the boundary metric. Suppose that $\{x^\beta\}_{\beta=0}^n$ are any coordinates near the boundary such that $x_0$ is defining function and $\{x^i\}_{i=0}^n$
 are coordinates of $\M.$ Then on $D,$ we have:
\begin{equation}\label{3.1}
g_{ij}=h_{ij}.
\end{equation}
\begin{equation}\label{3.2}
R_{ij}=\frac{n-1}{n-2}(Ric_h)_{ij}+(\frac{1}{2n}S-\frac{1}{2(n-2)}S_h)h_{ij}+\frac{n-1}{2n^2}H^2h_{ij}.
\end{equation}
\begin{equation}\label{3.3}
R_{0i}=-(g^{00})^{-\frac{1}{2}}\frac{n-1}{n}\frac{\partial H}{\partial x_i}-\frac{g^{0j}}{g^{00}}R_{ij}.
\end{equation}
\begin{equation}\label{3.4}
R_{00}=\frac{1}{(g^{00})^2}(g^{0i}g^{0j}R_{ij}+g^{00}(\frac{1}{2}(S-S_h)-\frac{n-1}{2n}H^2)).
\end{equation}
\begin{equation}\label{3.5}
N(R_{0i})=(g^{00})^{-\frac{1}{2}}(-g^{j\beta}\partial_\beta R_{ji}+g^{\eta\beta}\Gamma_{i\beta}^\tau R_{\eta\tau})
\end{equation}
\begin{equation}\label{3.6}
g^{\eta\beta}\partial_\eta (g_{\alpha\beta}-\frac{1}{2}\partial_\alpha g_{\eta\beta})=0
\end{equation}
\begin{equation}\label{3.7}
\partial_kA_{ij}=-\frac{1}{n-1}(g^{00})^{\frac{1}{2}}(R_{0k}+\frac{g^{0j}}{g^{00}}R_{ij})
\end{equation}
where $N=\frac{\nabla x_0}{|\nabla x_0|}=(g^{00})^{-\frac{1}{2}}g^{0\beta}\partial_\beta$ is the unit norm vector on $\M$ and $R_{\alpha\beta},S,H,A$ are Ricci curvature, scalar curvature mean curvature and the second fundamental form respect to $g$ on the boundary.
\end{prop}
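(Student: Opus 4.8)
The plan is to derive the seven identities from a common toolkit: the Gauss and Codazzi equations for the hypersurface $\M\subset(\overline M,g)$, the conformal relations (\ref{2.1})--(\ref{2.2}) together with the Einstein equation (\ref{1.2}), the umbilicity of $\M$ (which follows from (\ref{2.2})), the contracted second Bianchi identity combined with the constancy of $S_g$ for the Yamabe compactification, and the harmonic-coordinate identity $g^{\eta\beta}\Gamma^{\tau}_{\eta\beta}=0$. Throughout, the recurring subtlety is that the coordinate vector $\partial_0$ is not the unit normal; I would systematically pass between the geometric normal $N=(g^{00})^{-\frac12}g^{0\beta}\partial_\beta$ and the coordinate frame, and this is the source of every factor $(g^{00})^{\pm\frac12}$ and $g^{0j}/g^{00}$ on the right-hand sides. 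Condition (\ref{3.1}) is immediate: restricting $g$ to $T\M$ gives the induced metric, so $g_{ij}|_D=h_{ij}$.

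Next I would treat (\ref{3.2}) and (\ref{3.4}) through the Gauss equation. The contracted Gauss equation reads $Ric_h(e_i,e_j)=R_{ij}-R^g(N,e_i,e_j,N)+HA_{ij}-(A^2)_{ij}$, and umbilicity forces $A_{ij}=\frac{H}{n}h_{ij}$, hence $HA_{ij}=\frac{H^2}{n}h_{ij}$, $(A^2)_{ij}=\frac{H^2}{n^2}h_{ij}$ and $|A|^2=H^2/n$. Taking the full trace yields a relation of the form $S_h=S-2\,Ric(N,N)+cH^2$ with $c$ fixed by umbilicity, which determines $Ric(N,N)$ in terms of $S,S_h,H$; rewriting $Ric(N,N)=(g^{00})^{-1}g^{0\alpha}g^{0\beta}R_{\alpha\beta}$ in coordinates and using (\ref{3.3}) to remove the cross terms isolates $R_{00}$, giving (\ref{3.4}). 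For (\ref{3.2}) the essential extra ingredient is the ambient normal-plane curvature $R^g(N,e_i,e_j,N)$; here I would use the conformal sectional-curvature formula (\ref{2.1}) and the Einstein condition, evaluated on $\M$ where $|\nabla\rho|=1$, to express this normal curvature through $h$, $H$, $S$ and $S_h$, and then read off the tangential Ricci. The presence of the factor $\frac{n-1}{n-2}$ signals that this step is really the boundary Schouten-tensor decomposition of the intrinsic curvature, which is what produces the stated coefficients.

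The mixed and derivative identities come from Codazzi and Bianchi. For (\ref{3.3}) I would contract the Codazzi equation $R^g(e_i,e_j,e_k,N)=(\nabla_iA)(e_j,e_k)-(\nabla_jA)(e_i,e_k)$ over $j,k$; umbilicity turns the right side into $\frac1n[(\partial_iH)h_{jk}-(\partial_jH)h_{ik}]$, whose trace gives $Ric(e_i,N)=\frac{n-1}{n}\partial_iH$, and writing $Ric(e_i,N)=(g^{00})^{-\frac12}(g^{00}R_{0i}+g^{0j}R_{ij})$ rearranges into exactly (\ref{3.3}). Identity (\ref{3.5}) is the cleanest: the contracted second Bianchi identity gives $g^{\alpha\gamma}\nabla_\gamma R_{\alpha i}=\frac12\partial_iS=0$ since $S$ is constant for the Yamabe compactification, and expanding the covariant derivative the term $g^{\alpha\gamma}\Gamma^{\tau}_{\gamma\alpha}R_{\tau i}$ drops out by the harmonic condition $g^{\alpha\gamma}\Gamma^{\tau}_{\gamma\alpha}=0$; splitting off $g^{0\gamma}\partial_\gamma R_{0i}=(g^{00})^{\frac12}N(R_{0i})$ then yields (\ref{3.5}) directly. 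Identity (\ref{3.6}) is the harmonic-coordinate condition $\Delta_gx^\alpha=0$ written out: expanding $g^{\eta\beta}\Gamma^{\alpha}_{\eta\beta}=0$ and lowering an index produces $g^{\eta\beta}(\partial_\eta g_{\alpha\beta}-\frac12\partial_\alpha g_{\eta\beta})=0$. Finally (\ref{3.7}) follows by differentiating the umbilic relation $A_{ij}=\frac{H}{n}h_{ij}$ tangentially and substituting the expression for $\partial_kH$ extracted from (\ref{3.3}), which reproduces the coefficient $-\frac{1}{n-1}(g^{00})^{\frac12}$.

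The main obstacle I anticipate is (\ref{3.2}): extracting the ambient normal curvature $R^g(N,e_i,e_j,N)$ at the boundary, since the conformal formula (\ref{2.2}) presents the relevant data through the a priori singular combinations $(D^2\rho)/\rho$ and $(|\nabla\rho|^2-1)/\rho^2$. Making this rigorous requires the Einstein equation and the normal-form expansion (\ref{1.3}) to show these quotients have finite boundary limits and to pin down the exact constants, and the bookkeeping that converts these limits, together with the umbilic trace terms, into the precise coefficients $\frac{n-1}{n-2}$, $\frac{1}{2n}S-\frac{1}{2(n-2)}S_h$ and $\frac{n-1}{2n^2}H^2$ is where the real work lies; by contrast (\ref{3.1}), (\ref{3.5}) and (\ref{3.6}) are essentially formal, and (\ref{3.3}), (\ref{3.4}), (\ref{3.7}) reduce to Gauss--Codazzi plus frame conversion once umbilicity is in hand.
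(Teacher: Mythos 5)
The paper itself gives no proof of this proposition --- it is quoted from \cite{helliwell2008boundary} and \cite{jin2019finite} --- so the comparison is against the standard derivation in those references. Your toolkit is the same one they use: (\ref{3.1}) is the definition of the induced metric; (\ref{3.6}) is the harmonic-gauge identity $g^{\eta\beta}\Gamma^\tau_{\eta\beta}=0$ with an index lowered; (\ref{3.5}) is the contracted second Bianchi identity plus $S=\mathrm{const}$ plus the harmonic condition, and your expansion of the covariant divergence reproduces the stated right-hand side exactly; (\ref{3.3}) is the traced Codazzi equation with umbilicity; (\ref{3.4}) is the doubly traced Gauss equation rewritten through the frame conversion $N=(g^{00})^{-1/2}g^{0\beta}\partial_\beta$; and (\ref{3.7}) is the tangential derivative of $A_{ij}=\frac{H}{n}h_{ij}$ fed back through (\ref{3.3}). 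All of that is correct in outline and is essentially how Helliwell and the earlier paper proceed.

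The one genuine gap is (\ref{3.2}), and it is exactly the step you flag but do not close. The contracted Gauss equation reduces (\ref{3.2}) to knowing the boundary value of $R^g(N,e_i,e_j,N)$, but the route you propose --- evaluating the conformal sectional-curvature formula (\ref{2.1}) ``on $\M$ where $|\nabla\rho|=1$'' --- cannot work as literally stated: at $\rho=0$ the right-hand side of (\ref{2.1}) is an indeterminate $0/0$, and the boundary restriction of $|\nabla\rho|^2$ and $D^2\rho$ alone determines nothing. What is actually needed is the \emph{first-order} normal expansion of $D^2\rho$ and the \emph{second-order} expansion of $|\nabla\rho|^2-1$, which one extracts from the Einstein equation via a L'H\^opital/Taylor argument (equivalently, by differentiating $\rho\,Ric=-(n-1)D^2\rho+[\cdots]g$ in the normal direction, or by invoking the normal form (\ref{1.3}) with $g^{(2)}=-P^h$, or by proving $W(N,e_i,e_j,N)|_{\M}=0$ so that the conformal Gauss equation for umbilic hypersurfaces collapses to $P^g|_{T\M}=P^h+\frac{H^2}{2n^2}h$, which is precisely (\ref{3.2})). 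None of these computations appears in your write-up; the coefficients $\frac{n-1}{n-2}$ and $-\frac{1}{2(n-2)}S_h$ are generated exactly there, so until that limit is carried out, (\ref{3.2}) --- and with it the quantitative content of the proposition that the later sections actually use --- is asserted rather than proved. The remaining items are fine modulo routine sign-convention checks.
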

For the higher order boundary conditions, we have the following result. It could be obtained by the proof of Proposition 4.1. in \cite{helliwell2008boundary}.
\begin{prop}
Let $(M,g^+)$ be a conformally compact Einstein manifold of dimension $n+1\geq 4$ and $g=\rho^2g^+$ be a $C^{n-1}$ Yamabe compactification. $g|_{\M}=h$ is the boundary metric. Suppose that $(\overline{U}_p,\{y^\beta\}_{\beta=0}^n)$ are any $C^{n}$  structure of $\M\times[0,\epsilon)$ with $y^0|_{\M}=0.$ Then on $\M,$ we have:
\begin{equation}\label{3.8}
\Delta^mRic_{\alpha\beta}=Q(g^{-1},(g^{00})^{-1/2},\partial^{2m+1}g,\partial_t^{2m+2}h)
\end{equation}
for $1\leq m\leq\frac{n+1}{2}-2$ where $Q$ is a polynomial and $\partial_t$ is derivative with respect to coordinates $y^i$ for $i=1,2,...,n.$

\end{prop}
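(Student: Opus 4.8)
The plan is to convert the iterated Laplacian $\Delta^m Ric_{\alpha\beta}$ into a universal curvature polynomial by means of the once‑contracted second Bianchi identity, and then to exploit the fact that the Yamabe compactification has \emph{constant} scalar curvature near $\M$ to annihilate the top‑order term. The value $m=0$ is already supplied by the Dirichlet condition \ref{3.2} of Proposition 3.1, so it suffices to treat $m\ge 1$.

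The first step is to record the Weitzenb\"ock‑type identity obtained from $\nabla^l Ric_{l\beta}=\frac12\nabla_\beta S$ by commuting one further covariant derivative,
\[
\Delta Ric_{\alpha\beta}=\tfrac12\nabla_\alpha\nabla_\beta S+Q_1(g^{-1},Rm,Rm),
\]
where $Q_1$ is a fixed contraction, quadratic in the full curvature tensor, with coefficients polynomial in $g^{-1}$. The point is that no term linear in $\nabla Rm$ survives: writing $\nabla^k\nabla_k Ric_{\alpha\beta}=\nabla^k\nabla_\alpha Ric_{k\beta}-\nabla^k\nabla^l R_{lk\alpha\beta}$, the first piece produces $\frac12\nabla_\alpha\nabla_\beta S$ plus a curvature$\,\times\,$Ricci commutator, while the second piece is contracted against a pair of indices antisymmetric in $Rm$, so only its commutator (again quadratic in $Rm$) remains. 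Since $g\in C^{n-1}$ and $m\le\frac{n+1}{2}-2$ forces $2m+2\le n-1$, the tensor $\Delta^m Ric_{\alpha\beta}$ is classically defined up to $\M$, and this identity—valid for smooth metrics—passes to the present regularity by approximation.

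The decisive observation is that for the Yamabe compactification $S\equiv -1$ throughout a collar $\M\times[0,\epsilon)$, so $\nabla^2 S\equiv 0$ together with all of its iterated Laplacians. Applying $\Delta^{m-1}$ to the identity above and expanding by the Leibniz rule therefore yields
\[
\Delta^m Ric_{\alpha\beta}=\Delta^{m-1}Q_1(g^{-1},Rm,Rm),
\]
a universal polynomial in $g^{-1}$ and in the covariant derivatives of $Rm$ of order at most $2m-2$. Expressed in the chart $\{y^\beta\}$, via the coordinate formulas for $Rm$ and its covariant derivatives, this is a polynomial in $g^{-1}$ and in $\partial^j g$ for $j\le 2m$, which is \emph{a fortiori} of the asserted form $Q(g^{-1},(g^{00})^{-1/2},\partial^{2m+1}g,\partial_t^{2m+2}h)$. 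In fact one obtains the sharper conclusion that, once $S$ is normalized, neither the boundary data $h$ nor the factor $(g^{00})^{-1/2}$ is needed.

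It is worth explaining where the stated form—with $\partial_t^{2m+2}h$ and $(g^{00})^{-1/2}$—comes from, since that is the route of the proof of Proposition~4.1 in \cite{helliwell2008boundary} for a general compactification. Without normalizing $S$, the surviving term $\frac12\Delta^{m-1}\nabla_\alpha\nabla_\beta S$ is genuinely of order $2m+2$ in $g$ (as $S\sim\partial^2 g$), and its worst piece is the pure normal derivative $(g^{00})^{m+1}\partial_0^{2m+2}g$. One reduces this using the scalar‑curvature equation \ref{2.3} to trade normal for tangential data, and then the Fefferman--Graham recursion \ref{1.3}, which determines each coefficient $g^{(2k)}$ from $h$ by $2k$ tangential derivatives; the unit normal $N=(g^{00})^{-1/2}g^{0\beta}\partial_\beta$ is what introduces the factor $(g^{00})^{-1/2}$. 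The main obstacle, and the precise reason for the range $m\le\frac{n+1}{2}-2$, is exactly here: the reduction of $\partial_0^{2m+2}g$ to boundary data is legitimate only while $2m+2\le n-1<n$, i.e. strictly below the order of the non‑local coefficient $g^{(n)}$, beyond which $g^{(k)}$ is no longer determined by $h$ alone. Carrying out this reduction—or, more cleanly, invoking the constancy of $S$ to bypass it entirely—completes the proof, the only remaining care being the borderline H\"older bookkeeping at $2m+2=n-1$.
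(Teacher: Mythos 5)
The central identity in your first step is false, and the argument collapses with it. The once--contracted second Bianchi identity gives $\nabla^l R_{l\beta\alpha k}=\nabla_\alpha R_{\beta k}-\nabla_k R_{\beta\alpha}$, so the correct decomposition is
\begin{equation*}
\Delta R_{\alpha\beta}=\nabla^k\nabla_\alpha R_{\beta k}-\nabla^k\nabla^l R_{l\beta\alpha k},
\end{equation*}
and in the second term the two contracted derivative indices occupy the \emph{first} and \emph{fourth} slots of the Riemann tensor, which do not form an antisymmetric pair ($R_{l\beta\alpha k}$ is antisymmetric in $(l,\beta)$ and in $(\alpha,k)$, not in $(l,k)$). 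Hence the symmetrization trick does not annihilate it: $\nabla^k\nabla^l R_{k\alpha l\beta}$ is a genuine double divergence of the Riemann (equivalently, Weyl) tensor --- the very term that makes the Bach tensor fourth order in $g$ --- and it cannot be absorbed into $Q_1(g^{-1},Rm,Rm)$. Your identity $\Delta Ric_{\alpha\beta}=\tfrac12\nabla_\alpha\nabla_\beta S+Q_1(g^{-1},Rm,Rm)$ already fails at the linearized level: for a transverse--traceless perturbation $h$ of the flat metric one has $S=O(|h|^2)$ and $Rm\ast Rm=O(|h|^2)$, yet $\Delta Ric_{ij}=-\tfrac12\Delta^2h_{ij}+O(|h|^2)\neq O(|h|^2)$. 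Consequently the ``decisive observation'' and the ``sharper conclusion'' that normalizing $S$ reduces $\Delta^mRic$ to a polynomial in $\partial^{\le 2m}g$ are both wrong: $\Delta^mRic$ genuinely carries $2m+2$ derivatives of $g$ in the interior, and constancy of the scalar curvature only removes the $\nabla^2S$ piece.

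What survives is the sketch in your final paragraph, which is in fact the substance of the proof (the paper itself offers none, deferring to Proposition~4.1 of Helliwell): the statement is a \emph{boundary} identity, and one must trade the top-order normal derivatives $\partial_0^{2m+2}g$ and $\partial_0^{2m+1}\partial_tg$ appearing in $\Delta^mRic_{\alpha\beta}|_{\M}$ for one fewer interior derivative plus purely tangential derivatives of $h$, using the Gauss--Codazzi equations, the umbilicity of $\M$, the constancy of $S$ from \ref{2.3}, the Einstein equation, and the boundary conditions \ref{3.1}--\ref{3.7}; this is exactly where the factor $(g^{00})^{-1/2}$ (from the unit normal $N$) and the term $\partial_t^{2m+2}h$ enter, and where the restriction $2m+2\le n-1$ keeps the reduction below the non-local coefficient $g^{(n)}$ of \ref{1.3}. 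That reduction cannot be ``bypassed entirely''; it is the content of the proposition, and your proposal does not carry it out.
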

\section{Proof of the Main Theorem}\label{sect4}
Let's begin with the ambient obstruction tensor. We have already showed that for a $n+1$ (even) dimensional manifold, the ambient obstruction tensor
$$
\mathcal{O}_{ij}=\Delta^{\frac{n+1}{2}-2}(P_{ij,k}^{\ \ \ \ k}-P_{k\ \ ,ij}^{\ k})+\mathcal{Q}(Rm),
$$
where  $P_{ij}=\frac{1}{n-1}(R_{ij}-\frac{S}{2n}g_{ij}).$ If the scalar curvature is constant, then
$$P_{ij,k}^{\ \ \ \ k}-P_{k\ \ ,ij}^{\ k}=\frac{1}{n-1}R_{ij,k}^{\ \ \ \ k}=\frac{1}{n-1}\Delta Ric+\Gamma\ast\partial Ric+Rm\ast Rm.$$
From \cite{graham2005ambient} we know that $\mathcal{Q}(Rm)$ denotes quadratic and higher terms in curvature involving fewer derivatives.
 In fact, it was showed earlier in \cite{lopez2018ambient} that the lower order terms is of the following form.
\begin{prop}
For an $(n+1)$-dimensional manifold, the term $\mathcal{Q}(Rm)$ in ambient obstruction tensor is of the formula
$$\mathcal{Q}(Rm)=\sum\limits_{l=2}^{\frac{n+1}{2}}\sum\limits_{i_1+...+i_l=n+1-2l}\nabla^{i_1}(Rm)\ast\nabla^{i_2}(Rm)\cdots\ast\nabla^{i_l}(Rm)$$
Here $A\ast B$ denotes a quadratic term of $A$ and $B$ with  coefficients $g$ and $g^{-1}.$
\end{prop}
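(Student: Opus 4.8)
The plan is to combine two structural inputs. The first is the description of the ambient obstruction tensor as a \emph{natural} tensor: by the construction of Fefferman--Graham and Graham--Hirachi, $\mathcal{O}_{ij}$ is a polynomial expression in the metric, its inverse, and the covariant derivatives of the curvature tensor, its leading part being the linear term $\Delta^{\frac{n+1}{2}-2}(P_{ij,k}^{\ \ \ \ k}-P_{k\ \ ,ij}^{\ k})$, while $\mathcal{Q}(Rm)$ collects precisely the terms that are at least quadratic in curvature (``fewer derivatives''). By Weyl's classical invariant theory for $O(n+1)$-equivariant tensors, every such monomial is a complete metric contraction of a tensor product $\nabla^{i_1}Rm\otimes\cdots\otimes\nabla^{i_l}Rm$ with $l\geq 2$ factors, which is exactly the meaning of $\nabla^{i_1}(Rm)\ast\cdots\ast\nabla^{i_l}(Rm)$. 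So the only thing left to prove is the budget constraint $i_1+\cdots+i_l=n+1-2l$ together with the range $2\leq l\leq\frac{n+1}{2}$.

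The second input, which supplies this constraint, is homogeneity under the constant rescaling $g\mapsto\lambda^2 g$. First I would record the weight of each building block: with all indices lowered, $\nabla^{(r)}Rm$ scales as $\lambda^{2}$ (since $R_{abcd}=g_{ae}R^{e}_{\ bcd}$, and a constant rescaling leaves the Christoffel symbols, hence $R^{e}_{\ bcd}$ and all its covariant derivatives, unchanged), while each inverse-metric contraction contributes a factor $\lambda^{-2}$. A monomial $\nabla^{i_1}Rm\ast\cdots\ast\nabla^{i_l}Rm$ in $\mathcal{Q}$ is built from $l$ curvature factors carrying $\sum_j(i_j+4)$ lowered indices and is contracted down to the two free indices of $\mathcal{O}_{ij}$, which requires $\tfrac12\!\left(\sum_j i_j+4l-2\right)$ inverse-metric factors; the total weight is therefore
\[
2l-\Big(\sum_j i_j+4l-2\Big)=2-2l-\sum_j i_j .
\]
On the other hand, the same computation applied to the leading term shows that $\mathcal{O}_{ij}$ is homogeneous of weight $1-n$: indeed $\Delta$ rescales by $\lambda^{-2}$ and the bracketed Schouten expression by $\lambda^{-2}$, so the leading term rescales by $\lambda^{-2(\frac{n+1}{2}-2)}\cdot\lambda^{-2}=\lambda^{1-n}$ (equivalently, this is the known conformal weight $2-(n+1)$ of the obstruction tensor). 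Equating the two weights gives $2-2l-\sum_j i_j=1-n$, i.e. $\sum_j i_j=n+1-2l$, and the non-negativity $\sum_j i_j\geq 0$ forces $2l\leq n+1$, hence $2\leq l\leq\frac{n+1}{2}$.

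The main obstacle is not the scaling count, which is routine once the weights are fixed, but justifying that the admissible monomials are \emph{exactly} of the stated shape. This rests on two points to be handled with care: that $\mathcal{O}_{ij}$ really is a polynomial natural tensor whose non-leading part is genuinely quadratic-and-higher in curvature (so that $l\geq 2$), which is the structural content imported from \cite{graham2005ambient} and \cite{lopez2018ambient}; and that the weight of a contraction is well defined independently of how it is written, since the symbol $\ast$ is permitted to absorb both $g$ and $g^{-1}$. For the latter I would observe that any $g$ and $g^{-1}$ introduced to raise and lower indices occur in compensating pairs dictated by the fixed $(0,2)$ tensor type of the output, so the net contraction weight is forced and the computation above is unaffected by the bookkeeping.
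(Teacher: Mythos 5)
Your proposal is correct, but it proves the derivative count by a route genuinely different from the paper's. The paper works directly inside the Fefferman--Graham construction: it differentiates the equation $2xE_{ij}=-xg''_{ij}+\dots$ repeatedly at $x=0$, introduces the graded classes $A_m=\sum_{l=2}^{m+1}\sum_{i_1+\dots+i_l=2(m+1-l)}\nabla^{i_1}(Rm)\ast\cdots\ast\nabla^{i_l}(Rm)$, and proves by induction that $R^{(2m)}_{ij}|_{x=0}=\Delta^{m-1}(P_{ij,k}{}^{k}-P_{k}{}^{k}{}_{,ij})+A_m$, reading off $\mathcal{Q}=A_{\frac{n-1}{2}}$ at $m=\frac{n+1}{2}-1$. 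You instead take as input the structural facts that $\mathcal{O}_{ij}$ is a natural polynomial tensor whose non-leading part is at least quadratic in curvature (so Weyl's classification gives monomials $\nabla^{i_1}Rm\ast\cdots\ast\nabla^{i_l}Rm$ with $l\geq 2$), and then pin down the budget $\sum_j i_j=n+1-2l$ by homogeneity under $g\mapsto\lambda^2 g$; your weight bookkeeping (each lowered $\nabla^{(r)}Rm$ of weight $2$, each $g^{-1}$ of weight $-2$, leading term of weight $1-n$) is accurate, and the parity of $n+1$ makes the required number of contractions an integer, consistent with the paper's parity remarks. What each approach buys: yours is shorter and more robust, since it does not require tracking error terms through the induction, but it leans on the classification of natural tensors and on the imported qualitative statement that the remainder is ``quadratic and higher'' (so it does not independently establish $l\geq 2$ or identify the linear leading term); the paper's induction is longer but self-contained relative to the ambient construction and simultaneously re-derives the principal part $\Delta^{\frac{n-1}{2}}(P_{ij,k}{}^{k}-P_{k}{}^{k}{}_{,ij})$, which the rest of Section 4 uses. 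Since the paper itself also cites Graham--Hirachi for the ``quadratic and higher'' structure, your imports are on the same footing, and I regard your argument as a valid alternative proof.
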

\begin{proof}
Let's review the proof of Theorem 2.11 in \cite{graham2005ambient}. Let $n+1\geq 4$ be even and $N$ be an $(n+1)$-dimensional manifold with conformal structure $[g].$  There exists a metric $g^+$ with $x^2g^+$ smooth on $N\times [0,\epsilon)$ such
that $g^+$ has $[g]$ as conformal infinity and $Ric_{g^+} + (n+1)g^+ = O(x^{n-1}).$  In fact, $g^+=x^{-2}(dx^2+g_x)$ for a 1-parameter family $g_x$ of metrics on $N$ with $g_0 = g.$ Then the obstruction tensor is
$$\mathcal{O}=c_n\text{tf}(x^{1-n}(Ric_{g^+} + (n+1)g^+)|_{TN}), \ \ \ \ c_n=\frac{2^{n-1}(\frac{n+1}{2}-1)^2}{n-1}. $$
Let $E=Ric_{g^+} + (n+1)g^+),$ and we need to calculate $E.$ By Gauss Coddazi equation, we obtain that
\begin{equation}\label{4.1}
  2xE_{ij}=-xg''_{ij}+xg^{kl}g'_{ik}g'_{jl}-\frac{x}{2}g^{kl}g'_{kl}g'_{ij}+ng'_{ij}+g^{kl}g'_{kl}g_{ij}+2xRic({g_x})_{ij}
\end{equation}
where $'$  denote $\partial_x$ and $x$ is the normal direction on $N$ as a submanifold of $N\times [0,\epsilon).$
As it is showed in \cite{graham2005ambient}, the derivatives $\partial_x^s(g_x)|_{x=0}$ for $1\leq s\leq n$
are determined inductively by setting $E_{ij} = 0$ and differentiating in \ref{4.1}, and
the obstruction tensor $O_{ij}$ arises when trying to solve for $\partial_x^{n+1}(g_x)|_{x=0}.$ Parity considerations
show that these derivatives vanish for $s$ odd.
\\
\textbf{Step 1,} $g''_{ij}|_{x=0}=-2P_{ij},$ \ \ $R''_{ij}|_{x=0}=P_{ij,k}^{\ \ \ \ k}-P_{k\ ,ij}^{\ k}+Rm\ast Rm.$
\\ Differentiating \ref{4.1} once
gives $g''_{ij}|_{x=0}=-2P_{ij}.$ By the first variation of Ricci curvature,
$$R'_{ij}=\frac{1}{2}({g'_{ik,j}}^{k}+{g'_{jk,i}}^{k}-{g'_{ij,k}}^{k}-{g'_{k}}^{k}_{\ ,ij}).$$
Then $R'_{ij}|_{x=0}=0 $ and
\begin{equation}\label{4.2}
  \begin{aligned}
  R''_{ij}|_{x=0}&=\frac{1}{2}\cdot (-2)({P_{ik,j}}^{k}+{P_{jk,i}}^{k}-{P_{ij,k}}^{k}-{P_{k}}^{k}_{\ ,ij})\\
                 &=-(P_{ik,\ j}^{\ \ \ k}+P_{jk,\ i}^{\ \ \ k}-{P_{ij,k}}^{k}-{P_{k}}^{k}_{\ ,ij})+P\ast Rm\\
                 &=-({P_{k}}^{k}_{\ ,ij}+{P_{k}}^{k}_{\ ,ij}-{P_{ij,k}}^{k}-{P_{k}}^{k}-{P_{k}}^{k}_{\ ,ij})+P\ast Rm\\
                 &={P_{ij,k}}^{k}-{P_{k}}^{k}_{\ ,ij}+P\ast Rm
  \end{aligned}
\end{equation}
Here we use the Bianchi identity ${P_{ik,}}^k={P_{k}}^{k}_{\ ,i}$ and the Ricci formula
$$T_{i_1i_2...i_p,jk}-T_{i_1i_2...i_p,kj}=\sum\limits_{t=1}^pR^s_{i_tjk}T_{i_1i_2...i_{t-1}si_{t+1}...i_p}=Rm\ast T.$$
\textbf{Step 2,} the formula of $R^{(2m)}_{ij}|_{x=0}$ and $g^{(2m+2)}_{ij}|_{x=0}.$
\\ For $m\geq 1,$ let \begin{equation}\label{4.3}
  A_m=\sum\limits_{l=2}^{m+1}\sum\limits_{i_1+...+i_l=2(m+1-l)}\nabla^{i_1}(Rm)\ast\nabla^{i_2}(Rm)\cdots\ast\nabla^{i_l}(Rm).
\end{equation}
For instance, $A_1=Rm\ast Rm$ and $$A_2=\nabla^2(Rm)\ast Rm+\nabla(Rm)\ast\nabla(Rm)+Rm\ast Rm\ast Rm.$$

\textbf{Claim:} $R^{(2m)}_{ij}|_{x=0}=\Delta^{m-1}({P_{ij,k}}^{k}-{P_{k}}^{k}_{\ ,ij})+A_m$ for $1\leq m\leq \frac{n+1}{2}-1.$
\\~\\
 Here we assume that the coefficient of the principal part is $1$ by
multiplying a constant. Now we prove it by induction and it is trival for $m=1.$ If the claim is also correct for $1\leq m< \frac{n+1}{2}-1,$ then differentiating \ref{4.1} $2m+1$ times and setting $x = 0$ gives
\begin{equation}\label{4.4}
  \begin{aligned}
  0=\partial_x^{2m+1}(2xE)|_{x=0}&=(n-1-2m)g^{(2m+2)}_{ij}+g^{kl}g^{(2m+2)}_{kl}g_{ij}
                                \\&+2(2m+1)R^{(2m)}_{ij}+g^{(a)}\ast g^{(b)}\ast g^{(c)}
  \end{aligned}
\end{equation}
Here $a+b+c=2m+2,$ $a,b,c\leq 2m$ and they are all even and hence $g^{(a)}\ast g^{(b)}\ast g^{(c)}=A_m.$ Then
\begin{equation}\label{4.5}
  g^{(2m+2)}_{ij}|_{x=0}=R^{(2m)}_{ij}|_{x=0}+A_m.
\end{equation}
Let $C= g^{(2m+2)}|_{x=0},$ then
\begin{equation}\label{4.6}
  \begin{aligned}
      R^{(2m+2)}_{ij}|_{x=0}&=\frac{1}{2}({C_{ik,j}}^{k}+{C_{jk,i}}^{k}-{C_{ij,k}}^{k}-{C_{k}}^{k}_{\ ,ij})\\
                            &=\frac{1}{2}(C_{ik,\ j}^{\ \ \ k}+C_{jk,\ i}^{\ \ \ k}-{C_{ij,k}}^{k}-{C_{k}}^{k}_{\ ,ij})+C\ast Rm
  \end{aligned}
\end{equation}
For the first term,
\begin{equation}\label{4.7}
  \begin{aligned}
  C_{ik,\ j}^{\ \ \ k}&={[\Delta^{m-1}(P_{ik,t}^{\ \ \ \ t}-P_{s\ ,ik}^{\ s})+A_m]}^k_{\ j}\\
                      &={(P_{ik,t}^{\ \ \ \ t}-P_{s\ ,ik}^{\ s})}_{d_1\ d_2\ ...\ d_{m-1}\ \ j}^{\ d_1\ d_2\ ...\ d_{m-1}k}+A_{m+1}\\
                      &={(P_{ik,t\ \ j}^{\ \ \ \ tk}-P_{s\ ,ik\ j}^{\ s\ \ \ k})}_{d_1\ d_2\ ...\ d_{m-1}}^{\ d_1\ d_2\ ...\ d_{m-1}}+A_{m+1}\\
                      &=\Delta^{m-1}(P_{ik,\ t\ j}^{\ \ \ k\ t}-P_{s\ ,ik\ j}^{\ s\ \ \ k})+A_{m+1}\\
                      &=\Delta^{m-1}(P_{k\ ,it\ j}^{\ k\ \ \ t}-P_{s\ ,ik\ j}^{\ s\ \ \ k})+A_{m+1}\\
                      &=A_{m+1}
    \end{aligned}
\end{equation}
Here we use the Ricci formula on and on. Similarly, $C_{jk,\ i}^{\ \ \ k}=A_{m+1}.$ It is trivial that ${C_{k}}^{k}=A_m,$ which gives ${C_{k}}^{k}_{\ ,ij}=A_{m+1}.$ Let's continue to deal with
\ref{4.6}, and obtain that
$$R^{(2m+2)}_{ij}|_{x=0}=-\frac{1}{2}{C_{ij,k}}^{k}+A_{m+1}=-\frac{1}{2}\Delta^{m}({P_{ij,k}}^{k}-{P_{k}}^{k}_{\ ,ij})+A_{m+1}$$
Hence the claim is also true for $m+1.$
\\
\textbf{Step 3,the ambient obstruction tensor}
\\ Let $m=\frac{n+1}{2}-1$ in \ref{4.4}, we obtain that
\begin{equation}\label{4.8}
  \partial_x^n(2xE_{ij})|_{x=0}=g^{kl}g^{(n+1)}_{kl}g_{ij}+2nR^{(n-1)}_{ij}+g^{(a)}\ast g^{(b)}\ast g^{(c)},
\end{equation}
where $a+b+c=n+1,$ $a,b,c\leq n-1$ and they are all even. Hence $g^{(a)}\ast g^{(b)}\ast g^{(c)}=A_{\frac{n-1}{2}}.$
Since $\text{tf}(g_{ij})=0,$ $\text{tf}(A_{\frac{n-1}{2}})=A_{\frac{n-1}{2}}$ and
$$\text{tf}(R^{(n-1)}_{ij}|_{x=0})=R^{(n-1)}_{ij}-\frac{g^{kl}R^{(n-1)}_{kl}}{n+1}g_{ij}=R^{(n-1)}_{ij}+A_{\frac{n-1}{2}},$$
finally, we could get that
\begin{equation}\label{4.9}
  \begin{aligned}
 \mathcal{O}_{ij}&=\text{tf}(x^{1-n}(Ric_{g^+} + (n+1)g^+)|_{TN})\\
                 &=\partial_x^n(2xE_{ij})|_{x=0}=R^{(n-1)}_{ij}+A_{\frac{n-1}{2}}\\
                 &=\Delta^{\frac{n-1}{2}}({P_{ij,k}}^{k}-{P_{k}}^{k}_{\ ,ij})+A_{\frac{n-1}{2}}
     \end{aligned}
\end{equation}
Here we ignore the coefficient of the principal part and treat all of them as $1.$
\end{proof}

Now we can rewrite the ambient obstruction tensor flat equation as follows:
\begin{equation}\label{4.10}
  \begin{aligned}
\Delta^{\frac{n+1}{2}-2}&(\Delta Ric+\Gamma\ast\partial Ric)=\mathcal{Q}\\
                    &=\sum\limits_{l=2}^{\frac{n+1}{2}}\sum\limits_{i_1+...+i_l=n+1-2l}\nabla^{i_1}(Rm)\ast\nabla^{i_2}(Rm)\cdots\nabla^{i_l}(Rm)
 \end{aligned}
\end{equation}
Here $\mathcal{Q}$ is a polynomial in curvature involving at most $(n-3)$-th derivative with coefficients $g$ and $g^{-1}$, and the degree of $\mathcal{Q}(Rm)$ is $\frac{n+1}{2}.$

\subsection{Regularity of the Yamabe Metric $g$}
\par We have already solved the problem in the case where $n+1=4$ in \cite{jin2019finite}. Let $n+1\geq 6$ and $$u=\Delta^{\frac{n+1}{2}-3}(\Delta Ric+\Gamma\ast\partial Ric).$$ For a $C^{n-1}$ conformally compact Einstein metric $g=\rho^2g^+,$  $\rho\in C^{n-1,\sigma},$  we know that  $Ric\in C^{n-3}(\overline{M})$ in the initial smooth y-coordinates. We observe that from (\ref{2.2})
$$\rho Ric=-(n-1)D^2\rho+[\frac{n(|\nabla\rho|^2-1)}{\rho}-\Delta\rho]g=Q(\partial g,\partial^2\rho)\in C^{n-3,\sigma}(\overline{M},\{y\}).$$
Now let's deal with the metric and curvature in harmonic coordinates $\{x^\beta\}_{\beta=0}^n.$ As $g$ is $C^{n-1},$ $x\in C^{n-1,\alpha}(y), \forall \alpha\in(0,1).$ Then in x-coordinates, $g\in C^{n-2,\lambda}(\overline{M},\{x\})$ and
\begin{equation}\label{4.11}
Ric(\frac{\partial}{\partial x^\alpha},\frac{\partial}{\partial x^\beta})=\frac{\partial y^\gamma}{\partial x^\alpha}\frac{\partial y^\tau}{\partial x^\alpha}Ric(\frac{\partial}{\partial y^\gamma},\frac{\partial}{\partial y^\tau})\in C^{n-3}(\overline{M},\{x\}).
\end{equation}
Then $u$ is continuous in $x-$coordinates. We also have that
\begin{equation}\label{4.12}
\rho Ric(\frac{\partial}{\partial x^\alpha},\frac{\partial}{\partial x^\beta})=\rho\frac{\partial y^\gamma}{\partial x^\alpha}\frac{\partial y^\tau}{\partial x^\alpha}Ric(\frac{\partial}{\partial y^\gamma},\frac{\partial}{\partial y^\tau})\in C^{n-3,\sigma}(\overline{M},\{x\})
\end{equation}
which yield
$$\partial^{n-3}(\rho Ric)\in C^\sigma(\overline{M}).$$
Recall the regularity of $\rho$ and $Ric.$ As a consequence, $\rho\partial^{n-3}(Ric)\in C^\sigma(\overline{M}).$
By Lemma 4.4 in \cite{jin2019finite}, we conclude that
\begin{equation}\label{4.13}
\begin{aligned}
  |\partial^{n-3}(Ric)|_\sigma^{(1-\sigma)}&\leq C(|\rho\partial^{n-3}(Ric)|_\sigma+|\partial^{n-3}(Ric)|_0)\\
                       &\leq C(|\rho Ric|_{n-3+\sigma}+|\rho|_{n-3}|Ric|_{n-4}+|\partial^{n-3}(Ric)|_0)\\
                       &\leq C(n,|\rho Ric|_{n-3+\sigma},|\rho|_{n-3},|Ric|_{n-3}).\\
                       &\leq C(n,|\rho|_{n-1+\sigma},|g|_{n-2,\sigma},|Rm|_{n-3}).
 \end{aligned}
\end{equation}
So $$Ric\in H_{n-3+\sigma}^{(4-n-\sigma)}(\overline{M}).$$
\\ For any $p\in \M,$ let $(\overline{U},\{x^\beta\}_{\beta=0}^n)$ be the local harmonic chart of $p.$ Let $D=\overline{U}\cap \M$ be the boundary portion and $U=\overline{U}\cap M.$ Let $g\in C^{n-1}(\overline{U})$ be the Yamabe compactification.  Assume that $\lambda$ is the minimum eigenvalue of $g_{\alpha\beta}$ in $\overline{U}.$ Then locally, $u\in C^0(\overline{U})$ , $Ric\in H_{n-3+\sigma}^{(4-n-\sigma)}(\overline{U})$ and
\begin{equation}\label{4.14}
  |u|_0\leq C(|Rm|_{n-3},|g|_{n-4}),\ \ \ \ \ |Ric|_{n-3+\sigma,U}^{(4-n-\sigma)}\leq |Rm|_{n-3}+|\partial^{n-3}(Ric)|_{\sigma,U}^{(1-\sigma)}
\end{equation}
\begin{lemm}\label{lemma 4.2}
In harmonic coordinates, $g\in H_{n-1+\sigma}^{(2-n-\sigma)}(\overline{U}).$
\end{lemm}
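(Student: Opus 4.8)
The plan is to read the harmonic-coordinate identity $\Delta_g g_{\alpha\beta}=-2R_{\alpha\beta}+Q(g,\partial g)$, valid for every component, as a decoupled system of scalar second-order Dirichlet problems sharing the single principal operator $\Delta_g=g^{\mu\nu}\partial_\mu\partial_\nu$ (there are no first-order terms, precisely because the $x^\beta$ are $g$-harmonic), and to feed it into the intermediate Schauder estimate of Section \ref{sect2} with the exponents $a=n-1+\sigma$ and $b=n-2+\sigma$. With this choice $a-2=n-3+\sigma$, $2-b=4-n-\sigma$ and $-b=2-n-\sigma$, so the source is required to lie in $H_{n-3+\sigma}^{(4-n-\sigma)}$, which is exactly the weighted space already housing $Ric$, and the conclusion $u\in H_a^{(-b)}$ reads precisely $g\in H_{n-1+\sigma}^{(2-n-\sigma)}$. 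Thus the lemma amounts to a single gain of two derivatives across the Laplacian.

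First I would fix the H\"older exponent $\lambda\in(\sigma,1)$, permissible because the harmonic chart change is $C^{n-1,\alpha}$ for every $\alpha\in(0,1)$, so that $g\in C^{n-2,\lambda}(\overline{U})$ with $\lambda\ge\sigma$, and then check the structural hypotheses. Since $\sigma\in(0,1)$ both $a$ and $b$ are non-integers, $0<b\le a$, and $a=n-1+\sigma>2$ because $n+1\ge 6$; moreover $D$ is flat in the $x$-coordinates while the remaining part of $\partial\overline{U}$ lies in the interior, so $\overline{U}$ can be arranged as a $C^\gamma$ domain with $\gamma\ge b$. For the coefficients, $g^{\mu\nu}\in C^{n-2,\lambda}(\overline{U})$ is continuous, hence lies in $H_0$, and since the weight sum $(n-3+\sigma)+(4-n-\sigma)=1$ is positive it also embeds into $H_{n-3+\sigma}^{(4-n-\sigma)}$ by properties (2) and (3); ellipticity follows from the lower bound on the eigenvalues of $g_{\alpha\beta}$ in $\overline{U}$. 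As every derivative order in the operator satisfies $|\alpha|\le 2<n-2+\sigma=b$, the auxiliary condition imposed on coefficients of order exceeding $b$ is vacuous.

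Next I would place the source $f_{\alpha\beta}=-2R_{\alpha\beta}+Q(g,\partial g)$ in $H_{n-3+\sigma}^{(4-n-\sigma)}(\overline{U})$. The curvature term is already there by the estimate preceding the lemma, namely $Ric\in H_{n-3+\sigma}^{(4-n-\sigma)}(\overline{U})$. For the polynomial tail the highest-order factor is $\partial g\in C^{n-3,\lambda}$; because the weight sum is again positive, both $g$ and $\partial g$ embed into the weighted space by properties (2) and (3), and the product property (4), combined with the $L^\infty$ bounds on $g$ and $g^{-1}$, keeps $Q(g,\partial g)$ in $H_{n-3+\sigma}^{(4-n-\sigma)}$. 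For the Dirichlet data I would take $\varphi=g|_{\partial\overline{U}}$: on the face $D$ the established regularity gives $g|_D\in C^{n-2,\lambda}$ (with $g_{ij}|_D=h_{ij}$), while on the interior part of $\partial\overline{U}$ the metric is smooth, so $\varphi\in C^{n-2,\lambda}(\partial\overline{U})\subset C^{n-2,\sigma}(\partial\overline{U})=H_b(\partial\overline{U})$ since $\lambda\ge\sigma$. Applying the Schauder estimate to each component then delivers $g\in H_{n-1+\sigma}^{(2-n-\sigma)}(\overline{U})$.

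The delicate point, and the reason the argument is not circular, is the placement of the source in the correct weighted space: the regularity $Ric\in H_{n-3+\sigma}^{(4-n-\sigma)}$ was drawn from the Einstein relation $\rho\,Ric=Q(\partial g,\partial^2\rho)$ together with $\rho\in C^{n-1,\sigma}$, so it rests only on the initial $C^{n-2,\lambda}$ regularity of $g$ rather than on the improvement being proved. I expect the main bookkeeping effort to be the weighted product estimate for $Q(g,\partial g)$ and the verification that using the trace $g|_{\partial\overline{U}}$, in place of the mixed Dirichlet and oblique conditions of Section \ref{sect3}, is admissible here; the sharper datum $h\in C^{m,\alpha}$ enters only later, when boundary regularity beyond $C^{n-2,\sigma}$ must be extracted.
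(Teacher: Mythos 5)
Your proposal is correct and follows essentially the same route as the paper: both apply the intermediate Schauder estimate with $a=n-1+\sigma$, $b=n-2+\sigma$ to the harmonic-coordinate identity $\Delta g_{\alpha\beta}=-2R_{\alpha\beta}+Q(g,\partial g)$, using the previously established $Ric\in H_{n-3+\sigma}^{(4-n-\sigma)}(\overline{U})$ for the source and $g|_{D}\in C^{n-2,\sigma}$ for the Dirichlet data. Your write-up merely makes explicit the hypothesis checks (non-integrality of $a,b$, coefficient embeddings via properties (2)--(4), vacuity of the condition for $|\alpha|>b$) that the paper leaves implicit.
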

\begin{proof}
In harmonic charts,
$$\Delta g_{\alpha\beta}=-2R_{\alpha\beta}+Q(g,\partial g).$$
Since $g_{\alpha\beta}\in C^{n-2,\alpha}(\overline{U}),$ then $g_{\alpha\beta}\in C^{n-2,\alpha}(D).$ Let $a=n-1+\sigma,\ b=n-2+\sigma,$ and according to the intermediate Schauder theory, $g_{\alpha\beta}\in H_{n-1+\sigma}^{(2-n-\sigma)}(\overline{U}).$ Furthermore,
\begin{equation}\label{4.15}
|g_{\alpha\beta}|_{n-1+\sigma}^{(2-n-\sigma)}\leq C(n,|g|_{1+\sigma},\overline{U},\lambda)(|g|_0+|g|_{n-2+\sigma,D}+|Ric|_{n-3+\sigma}^{(4-n-\sigma)})
\end{equation}
\end{proof}
Now we have that $g\in  H_{n-1+\sigma}^{(2-n-\sigma)}(\overline{U}),$ so the curvature $Rm\in H_{n-3+\sigma}^{(4-n-\sigma)}(\overline{U}).$ By linear transformation of tensor in coordinate system (similar to (\ref{4.2})), $Rm\in C^{n-3}(\overline{U}).$ Recall $\mathcal{Q}$ in (\ref{4.10}) and we derive that $\mathcal{Q}\in H_\sigma^{(1-\sigma)}(\overline{M})$ from the basic property 4 in section 2.3 and
\begin{equation}\label{4.16}
|\mathcal{Q}|_\sigma^{(1-\sigma)}\leq C(n,\overline{U})|Rm|_{n-3} |Rm|_{n-3+\sigma}^{(4-n-\sigma)}\leq C(n,\overline{U},|Rm|_{n-3},|g|_{n-1+\sigma}^{(2-n-\sigma)})
\end{equation}
Finally, using (\ref{4.13})-(\ref{4.16}), we could obtain that
\begin{equation}\label{4.17}
|\mathcal{Q}|_\sigma^{(1-\sigma)}\leq C(n,\overline{U},\lambda,|g|_{n-2,\sigma},|Rm|_{n-3},|\rho|_{n-1+\sigma})
\end{equation}

Now we conclude that $u\in C^\infty(U)\cap C^0(\overline{U})$ is the solution of the ambient obstruction flat equation
$$\Delta u=\mathcal{Q}  \ \ in \ \ U$$
with the boundary condition
$$u|_D=[\Delta^{\frac{n+1}{2}-2}Ric+\Delta^{\frac{n+1}{2}-3}(\Gamma\ast\partial Ric)]|_D=Q(g^{-1},(g^{00})^{-1/2},\partial^{n-2}g,\partial_t^{n-1}h)$$
This yields $u\in H_{2+\sigma}^{(-\sigma)}(U)$ and
\begin{equation}\label{4.18}
\begin{aligned}
|u|_{2+\sigma,U}^{(-\sigma)}&\leq C(\overline{U},\lambda,|g|_{1+\sigma})(|u|_0+|u|_{\sigma,D}+|\mathcal{Q}|_\sigma^{(2-\sigma)})
                          \\&\leq C(n,\overline{U},\lambda,|g|_{n-2,\sigma},|Rm|_{n-3},|h|_{n-1+\sigma,D},|\rho|_{n-1+\sigma})
                          \\&\leq C(n,\overline{U},\lambda,|g|_{1+\sigma},|Rm|_{n-3},|h|_{n-1+\sigma,D},|\rho|_{n-1+\sigma})
\end{aligned}
\end{equation}
The last inequality holds because in harmonic chart,
$$|g|_{n-2,\sigma}\leq C(n,\overline{U},\lambda,|g|_{1+\sigma})(|Ric|_{n-4,\sigma}+|h|_{n-2,\sigma}).$$
Particularly, $u$ is H\"older continuous, i.e. $u=\Delta^{\frac{n+1}{2}-3}(\Delta Ric+\Gamma\ast\partial Ric)\in C^\sigma(\overline{U}).$ It follows from the classic Schauder theory \cite{gilbarg2015elliptic} with boundary condition (\ref{3.8}) that
$$\Delta Ric+\Gamma\ast\partial Ric\in C^{n-5,\sigma}(\overline{U}),$$
Then using the boundary conditions (\ref{3.2}),(\ref{3.4}) and (\ref{3.5}), we obtain
$R_{\alpha\beta}\in C^{n-3,\sigma}(\overline{U}).$
Finally, as in harmonic chart,
$$\Delta g_{\alpha\beta}=-2R_{\alpha\beta}+Q(g,\partial g)$$
then we could use the boundary conditions (\ref{3.1}),(\ref{3.6}) and (\ref{3.7}) to derive that
$g_{\alpha\beta}\in C^{n-1,\sigma}(\overline{U})$ and
$$|g_{\alpha\beta}|_{n-1+\sigma}\leq C(n,\overline{U},\lambda,|g|_{1+\sigma},|Rm|_{n-3},|h|_{n-1+\sigma,D},|\rho|_{n-1+\sigma})$$
\par Now we look back on the ambient obstruction flat equation (\ref{4.10}) and we have already shown that $\mathcal{Q}\in C^\sigma(\overline{U}).$
 Repeat the steps above (or use the classic Schauder theory ), we could improve the regularity of metric $g$ gradually, and finally $g\in C^{m,\alpha}(\overline{U})$ and
$$|g_{\alpha\beta}|_{m+\alpha}\leq  C(n,\overline{U},\lambda,|g|_{1+\sigma},|Rm|_{n-3},|h|_{m+\alpha,D},|\rho|_{n-1+\sigma}).$$

\subsection{Regularity of the New Structure and Defining Function}
We have already showed that $g\in C^{m,\alpha}$ in harmonic chart, so $\bigcup\limits_{p\in\M}(\overline{U}_p,\{x^\theta\}_{\theta=0}^n)$ form a $C^{m+1,\alpha}$ differential structure of  $\M\times[0,\epsilon)$ for some $\epsilon>0.$

\par If $g=\rho^2g^+$ is a Yamabe compactification, then $\rho\in C^{n-1,\sigma}(\overline{M})$ in harmonic coordinates and $\rho$ is smooth in interior. We only need to study the boundary regularity of the defining function. For any $p\in\M,$ take the harmonic chart $(\overline{U},\{x\})$ of $p$ and let $U=\overline{U}\cap M, \ D=\overline{U}\cap\M.$ We could also assume that $g_{\alpha\alpha}=1, g_{ij}=g_{02}=g_{03}=\cdots=g_{0n}=0 (i\neq j), g_{01}=g_{10}=\delta$ at $p$ where $\delta\in(0,1)$ is sufficiently close to 1. According to (\ref{2.2}) and (\ref{2.3})
$$Ric-\frac{Sg}{n+1}=-(n-1)\frac{D^2\rho}{\rho}+\frac{n-1}{n+1}\frac{\Delta\rho}{\rho}g.$$
Locally, when acting on $(\frac{\partial}{\partial x^0},\frac{\partial}{\partial x^1})$,
\begin{equation}\label{4.19}
\Delta\rho-(n+1)\cdot g_{01}^{-1}\cdot D^2\rho(\frac{\partial}{\partial x^0},\frac{\partial}{\partial x^1})=\frac{n+1}{n-1}\cdot g_{01}^{-1}\cdot\rho(Ric_{01}-\frac{Sg_{01}}{n+1})
\end{equation}
If $1-\delta$ is small enough, the left side of the formula above is an elliptic operator around $p.$ $\rho|_D\equiv 0,$  $\rho\in C^{n-1,\sigma}(\overline{U}).$ In order to improve the $C^{m+1,\alpha}$ regularity of $\rho,$ we require that $\rho(Ric_{01})$ in (\ref{4.19}) is at least $C^{m-1,\alpha}.$  We have used the Bach flat equation to solve this problem in dimension 4 in \cite{jin2019finite}. If the dimension $n+1\geq 6,$ we could consider the ambient obstruction tensor instead of Bach tensor.
\par We use the symbol $Q_i^j$ to denote a polynomial in metric $g$ and defining function $\rho$ involving at most $i$-th derivative of $g$ and $j$-th derivative of $\rho.$
Actually,
$$\Delta(\rho R_{01})=\rho\Delta(R_{01})+R_{01}\Delta\rho+2g(\nabla\rho,\nabla R_{01})=\rho\Delta(R_{01})+Q_3^2.$$
It can be obtained by iterative method that
$$\Delta^k(\rho R_{01})=\rho\Delta^k(R_{01})+Q_{2k+1}^{2k}$$
for $1\leq k\leq \frac{n+1}{2}-1.$   By the ambient obstruction tensor flat equation,
$$\Delta^{\frac{n+1}{2}-1}R_{01}=Q_n^0.$$
 Now let's consider the following elliptic equation:
\begin{equation}\label{4.20}
 \left\{
    \begin{array}{l}
    \Delta^{\frac{n+1}{2}-1}(\rho R_{01})=Q_n^{n-1} \ \ in \ \  \overline{U}
    \\ \Delta^{\frac{n+1}{2}-2}(\rho R_{01})|_D=Q_{n-2}^{n-3}
    \\ ...
    \\ \Delta(\rho R_{01})|_D=Q_2^3
    \\ \rho R_{01}|_D=0
    \end{array}
 \right.
 \end{equation}
Then $\rho(Ric_{01})\in C^{m-1,\alpha}.$ So the defining function $\rho$ is $C^{m+1,\alpha}.$

\bibliographystyle{plain}%

\bibliography{bibfile}

\begin{thebibliography}{10}

\bibitem{anderson2004boundary}
Michael Anderson, Atsushi Katsuda, Yaroslav Kurylev, Matti Lassas, and Michael
  Taylor.
\newblock Boundary regularity for the ricci equation, geometric convergence,
  and gel’fand’s inverse boundary problem.
\newblock {\em Inventiones mathematicae}, 158(2):261--321, 2004.

\bibitem{anderson2008einstein}
Michael~T Anderson.
\newblock Einstein metrics with prescribed conformal infinity on 4-manifolds.
\newblock {\em Geometric and Functional Analysis}, 18(2):305--366, 2008.

\bibitem{besse2007einstein}
Arthur~L Besse.
\newblock {\em Einstein manifolds}.
\newblock Springer Science \& Business Media, 2007.

\bibitem{chang2018compactness1}
Sun-Yung~A Chang and Yuxin Ge.
\newblock Compactness of conformally compact einstein manifolds in dimension 4.
\newblock {\em Advances in Mathematics}, 340:588--652, 2018.

\bibitem{chang2021compactness}
Sun-Yung~A Chang, Yuxin Ge, Xiaoshang Jin, and Jie Qing.
\newblock On compactness conformally compact einstein manifolds and uniqueness
  of graham-lee metrics, iii.
\newblock {\em preprint}, 2021.

\bibitem{chang2020compactness}
Sun-Yung~A Chang, Yuxin Ge, and Jie Qing.
\newblock Compactness of conformally compact einstein 4-manifolds ii.
\newblock {\em Advances in Mathematics}, 373:107325, 2020.

\bibitem{chrusciel2005boundary}
Piotr~T Chru{\'s}ciel, Erwann Delay, John~M Lee, Dale~N Skinner, et~al.
\newblock Boundary regularity of conformally compact einstein metrics.
\newblock {\em Journal of Differential Geometry}, 69(1):111--136, 2005.

\bibitem{deturck1981some}
Dennis~M DeTurck and Jerry~L Kazdan.
\newblock Some regularity theorems in riemannian geometry.
\newblock In {\em Annales scientifiques de l'{\'E}cole Normale Sup{\'e}rieure},
  volume~14, pages 249--260, 1981.

\bibitem{gilbarg1980intermediate}
David Gilbarg and Lars H{\"o}rmander.
\newblock Intermediate schauder estimates.
\newblock {\em Archive for Rational Mechanics and Analysis}, 74(4):297--318,
  1980.

\bibitem{gilbarg2015elliptic}
David Gilbarg and Neil~S Trudinger.
\newblock {\em Elliptic partial differential equations of second order}.
\newblock springer, 2015.

\bibitem{graham1985conformal}
C~FEFFERMAN-CR Graham and C~Fefferman.
\newblock Conformal invariants.
\newblock {\em Elie Cartan et mathematiques d'aujourd'hui, Asterisque, hors
  serie (Societe Mathematique de France, Paris)}, pages 95--116, 1985.

\bibitem{graham2005ambient}
C~Robin Graham and Kengo Hirachi.
\newblock The ambient obstruction tensor and q-curvature.
\newblock {\em AdS/CFT correspondence: Einstein metrics and their conformal
  boundaries}, 8:59--71, 2005.

\bibitem{helliwell2008boundary}
Dylan~William Helliwell.
\newblock Boundary regularity for conformally compact einstein metrics in even
  dimensions.
\newblock {\em Communications in Partial Differential Equations},
  33(5):842--880, 2008.

\bibitem{jin2017extension}
Xiao~Shang Jin.
\newblock An extension of weyl schouten theorem for lipschitz and h 2
  manifolds.
\newblock {\em Acta Mathematica Sinica, English Series}, 33(7):926--932, 2017.

\bibitem{jin2019finite}
Xiaoshang Jin.
\newblock Finite boundary regularity for conformally compact einstein manifolds
  of dimension 4.
\newblock {\em The Journal of Geometric Analysis}, 31:4004--4023, 2021.

\bibitem{lee1994spectrum}
John~M Lee.
\newblock The spectrum of an asymptotically hyperbolic einstein manifold.
\newblock {\em arXiv preprint dg-ga/9409003}, 1994.

\bibitem{lee2006fredholm}
John~M Lee.
\newblock {\em Fredholm operators and Einstein metrics on conformally compact
  manifolds}, volume~13.
\newblock American Mathematical Soc., 2006.

\bibitem{lopez2018ambient}
Christopher Lopez.
\newblock Ambient obstruction flow.
\newblock {\em Transactions of the American Mathematical Society},
  370(6):4111--4145, 2018.

\bibitem{maldacena1999large}
Juan Maldacena.
\newblock The large-n limit of superconformal field theories and supergravity.
\newblock {\em International journal of theoretical physics}, 38(4):1113--1133,
  1999.

\end{thebibliography}
\noindent{Xiaoshang Jin}\\
  School of mathematics and statistics, Huazhong University of science and technology, Wuhan, P.R. China. 430074
 \\Email address: jinxs@hust.edu.cn

\end{document}